\theoremstyle{plain}
\newcounter{thm}[section]
\newtheorem{theorem}[thm]{Theorem}
\newtheorem{lemma}[thm]{Lemma}
\newtheorem{corollary}[thm]{Corollary}
\newtheorem{proposition}[thm]{Proposition}
\newcounter{ass}
\newtheorem{assumption}[ass]{Assumption}
\theoremstyle{definition}
\newcounter{exer}[section]
\newcounter{exam}[section]
\newcounter{defi}[section]
\newtheorem{definition}[defi]{Definition}
\newcounter{rem}[section]
\newtheorem{remark}[rem]{Remark}
\numberwithin{equation}{section} %
\newcommand{\wt}{\widetilde} %
\newcommand{\ov}{\overline} %
\newcommand{\wh}{\widehat} %
\newcommand{\la}{\lambda}
\newcommand{\wla}{\widehat\lambda}
\newcommand{\lar}{\lambda^r}
\newcommand{\wmu}{\widehat\mu}
\newcommand{\wxi}{\widehat\xi}
\newcommand{\wxo}{\widehat x_0}%
\newcommand{\wxd}{\widehat x_d}%
\newcommand{\wxf}{\widehat x_f}%
\newcommand{\wpo}{\widehat p_0}%
\newcommand{\wpf}{\widehat p_f}%
\newcommand{\wT}{\widehat T}
\newcommand{\wtau}{\widehat\tau}
\newcommand{\wbe}{\widehat\beta}
\renewcommand{\a}{\alpha} %
\renewcommand{\b}{\beta} %
\newcommand{\e}{\varepsilon} %
\newcommand{\N}{\mathbb N} %
\newcommand{\R}{\mathbb R} %
\newcommand\Rn{{\mathbb R}^n} %
\newcommand\Rns{\left({\mathbb R}^n\right)^*} %
\newcommand{\wS}[1]{\widehat S_{#1}} %
\newcommand{\wSinv}[1]{\widehat S^{-1}_{#1}} %
\newcommand{\Sr}[1]{S^r_{#1}} %
\newcommand{\Srinv}[1]{  (S^r_{#1})^{-1}  } %
\newcommand{\abs}[1]{\left\vert#1\right\vert} %
\newcommand{\norm}[1]{\left\Vert#1\right\Vert} %
\newcommand{\qo}{\text{a.e. }} %
\newcommand{\cO}{{\mathcal O}} %
\newcommand{\cU}{{\mathcal U}} %
\newcommand{\cV}{{\mathcal V}} %
\newcommand{\reftri}{\big( \wT, \wh\xi,\wh u \big)} 
\newcommand{\unoforma}{{\boldsymbol s}} %
\newcommand{\scal}[2]{\langle {#1} \, , \, {#2} \rangle} %
\newcommand{\dueforma}[2]{{\boldsymbol\sigma}\left( {#1}, {#2} \right) } %
\newcommand{\liebr}[2]{\left[  {#1}, {#2} \right] } %
\newcommand{\liede}[3]{  {#1} \cdot  {#2} \left( {#3}\right) } %
\newcommand{\liedede}[4]{  {#1} \cdot  {#2} \cdot{#3}\left( {#4}\right)  } %
\newcommand{\liededo}[3]{  {#1}^2 \cdot  {#2} \left( {#3}\right) } %
\newcommand{\bsi}{{\boldsymbol\sigma} } %
\newcommand{\ud}{\operatorname{d}\!} %
\newcommand{\da}{\operatorname{d}\!\alpha} %
\newcommand{\uD}{\operatorname{D}\!} %
\newcommand{\vFref}[1]{\overrightarrow{\wh F_{#1}}} %
\newcommand{\cFref}{\widehat{\mathcal F}} %
\newcommand{\vF}[1]{\overrightarrow{F_{#1}}} %
\newcommand{\vG}[1]{\overrightarrow G_{#1}} %
\newcommand{\vH}[1]{\overrightarrow{H_{#1}}} %
\newcommand{\vK}[1]{\overrightarrow{K_{#1}}} %
\newcommand{\vJ}[1]{\overrightarrow{J_{#1}}} %
\newcommand{\dds}{\displaystyle\frac{\ud}{\ud  s}} %
\newcommand{\ddt}{\displaystyle\frac{\ud}{\ud  t}} %
\newcommand{\dedt}[1]{\displaystyle\frac{\ud {#1}}{\ud  t}} %
\newcommand{\lo}{{\wh\ell_0}} %
\newcommand{\ld}{{\wh\ell_d}} %
\newcommand{\lf}{{\wh\ell_f}} %
\newcommand{\dl}{{\delta\ell}} %
\newcommand{\dt}{{\delta t}} %
\newcommand{\de}{{\delta e}} %
\newcommand{\df}{{\delta f}} %
\newcommand{\dx}{{\delta x}} %
\newcommand{\dep}{{\delta p}} %
\newcommand{\dxf}{{\delta x_f}} %
\newcommand{\dy}{\delta y} %
\newcommand{\se} {^{\prime \prime}} %
\newcommand{\Pbr}{\mathbf{P_r}}
\newcommand{\Pbz}{\mathbf{P_0}}
\newcommand{\ellor}{{\ell_0^r}} %
\newcommand{\mur}{\mu^r} 
\newcommand{\xir}{\xi^r} 
\newcommand{\Tr}{T^r}
\newcommand{\ur}{u^r}
\newcommand{\xik}{\xi^{r_k}}
\newcommand{\tauk}[1]{\tau^{r_k}_{#1}}
\newcommand{\Tk}{T^{r_k}}
\newcommand{\taur}[1]{\tau^r_{#1}}
\newcommand{\xor}{x_0^{r}} %
\newcommand{\Nor}{N_0^{r}} %
\newcommand{\Nfr}{N_f^{r}} %
\newcommand{\Phior}[1]{
\fourIdx{}{}{0,r}{{#1}}{\Phi} } %
\newcommand{\Phio}[1]{
\fourIdx{}{}{0,0}{{#1}}{\Phi} } %
\newcommand{\Psior}[1]{
\fourIdx{}{}{}{{#1}}{\Psi} } %
\newcommand{\ar}{\alpha^{r}}
\newcommand{\Phifr}[1]{ \fourIdx{}{}{f,r}{{#1}}{\Phi} } %
\newcommand{\Phif}[1]{ \fourIdx{}{}{f,0}{{#1}}{\Phi} } %
\newcommand{\br}{\beta^{r}}
\newcommand{\fr}[1]{f_{#1}^r}
\newcommand{\gr}[1]{g_{#1}^r}
\newcommand{\jr}[1]{j_{#1}^r}
\newcommand{\hr}[1]{h_{#1}^r}
\newcommand{\kr}[1]{k_{#1}^r}
\newcommand{\Fr}[1]{F_{#1}^r}
\newcommand{\Hr}[1]{H_{#1}^r}
\newcommand{\Kr}[1]{K_{#1}^r}
\newcommand{\vFr}[1]{\overrightarrow{F_{#1}^r}} %
\newcommand{\vHr}[1]{\overrightarrow{H_{#1}^r}} %
\newcommand{\vKr}[1]{\overrightarrow{K_{#1}^r}} %
\newcommand{\Span}{\operatorname{span}} 
\newcommand{\ball}{B_R}
\newcommand{\ballt}{B_{\widetilde R}}
\newcommand\pd[2]{\displaystyle\frac{\partial#1}{\partial#2}}
\title{Structural stability of bang--bang trajectories with a double switching time in the
  minimum time problem}
\author{Laura Poggiolini} %
 \date{\today
}
\begin{document}
\maketitle

\begin{abstract}
 In this paper we consider the problem of structural stability of strong local optimisers for the minimum time problem in the case when the nominal problem has a bang-bang strongly local optimal control which exhibits a double switch.
\end{abstract}

%
%

\section{Introduction}
In this paper we consider the minimum time problem between two submanifolds
of a finite dimensional manifold $M$ in the case when the dynamics is affine with respect to the control and the control takes values in a
box of $\R^m$.  Namely, the following optimal control is studied:
\begin{subequations}
  \label{eq:problema}
  \begin{alignat}{2}
    & T \to \min, 
    \\
    & \dot\xi(t) = f_{0}(\xi(t)) 
    + \displaystyle\sum_{s=1}^m u_{s}(t) f_{s}(\xi(t))\quad && \qo t
    \in [0, T], \label{eq:dinamica} \\
    & \xi(0) \in N_0, \qquad \xi(T) \in N_f, \label{eq:estremi} \\
    & \abs{u_{s}(t)} \leq 1 \quad s = 1,2,\ldots, m && \qo t \in [0, T]. \label{eq:box}
\end{alignat}
\end{subequations}
For such a problem, the triple $(T, \xi, u)$ is said to be an {\em admissible
  triple} for problem \eqref{eq:problema} if $T > 0$ and the couple $(\xi, u) \in W^{1, \infty}([0, T], M) \times L^\infty([0, T], \R^m)$ satisfies \eqref{eq:dinamica},
\eqref{eq:estremi} and \eqref{eq:box}. 
We assume that a triple $(\wh T, \wh\xi, \wh u)$ satisfying the necessary conditions for optimality (i.e.~Pontryagin Maximum Principle) is given where the control is bang-bang but multiple switches occur. 
To the author's knowledge the literature on bang-bang controls with multiple switches is much more scarce than the one with simple switches only. $L^1$-local optimality results for bang-bang controls with multiple switches in the minimum time problem between two fixed end points were given in \cite{Sar97}. In \cite{PSp15} the authors consider the case when a double switch occurs and all the other switches are simple. They prove that under suitable regularity conditions, and assuming the coercivity of the second order approximation of a certain finite-dimensional subproblem of the given one, the triple   $(\wh T, \wh\xi, \wh u)$ is in fact a state-local minimiser of the problem (see Definition \ref{def:stateloc} for a precise definition of this kind of strong local optimality).

Here we consider the same case as in \cite{PSp15} and we study the structural stability of the locally optimal control $\wh u$ under smooth perturbations of the data of the problem, namely the drift $f_0$, the controlled vector fields $f_1$, $f_2$, \ldots, $f_m$ and the submanifolds of the initial and final constraints.

In particular we are interested in understanding how the existence of the double switch and the bang-bang structure of the locally optimal control are affected by small perturbations of the data. Such a situation is in fact not generic and we show here that under the same assumptions that ensure state-local optimality of the reference triple plus a controllability assumption, the bang-bang structure of the locally optimal control is stable under small perturbations even though the double switching time may decouple into two simple switching times.

The proof is carried out by Hamiltonian methods, which were also used in \cite{PSp15} to prove the state local optimality result for the nominal problem. The same methods were also used in \cite{PS11} and \cite{PS13b} to prove state local optimality and structural stability of a bang-singular-bang extremal in the minimum time problem between two fixed end points.
The same methods were used in \cite{PSp11} and \cite{FPS09} for the problem of strong local optimality and structural stability of bang-bang extremals with a double switch in Mayer problem.

As in \cite{PSp15}, for the sake of notational simplicity we shall confine ourselves to the case when $M=\Rn$, $m=2$ and only the double switch occurs. However, as all the results are invariant under a change of coordinates, they can be easily generalised to the case when the state
space is a smooth finite dimensional manifold. Moreover, the presence of a finite number of simple switches occuring either before and/or after the double one can be treated at the expenses of a much heavier notation, see for example \cite{PSp11}.
Thus the nominal problem \eqref{eq:problema} simplifies to
\begin{alignat}{2}
    & T \to \min,   \label{eq:problema0}\tag{$\Pbz$} \\ 
    & \dot\xi(t) = f_{0}(\xi(t)) 
    + u_{1}(t) f_{1}(\xi(t)) + u_{2}(t) f_{2}(\xi(t)) \quad && \qo t
    \in [0, T], \notag \\ 
    & \xi(0) \in N_0, \qquad \xi(T) \in N_f, \notag \\ 
    & \abs{u_{s}(t)} \leq 1 \quad s = 1,2 && \qo t \in [0, T].  \notag
\end{alignat}
Without loss of generality  we can assume that $\wh u$
is given by
\begin{equation*}
  \wh u(t) =
  \left(     \wh u_1(t) , \  \wh u_2(t) \right) =
  \begin{cases}
    (-1, -1) \quad & t \in [0, \wtau) , \\
    (1, 1) \quad & t \in (\wtau, \wT].
  \end{cases}
\end{equation*}
We assume that \eqref{eq:problema0} is the problem we obtain when $r=0$ in the following parameter dependent problem \eqref{eq:problemar}:
  \begin{alignat}{2}
    & T \to \min,  \label{eq:problemar}\tag{$\Pbr$} \\ 
    & \dot\xi(t) = \fr{0}(\xi(t)) 
    + u_{1}(t) \fr{1}(\xi(t)) + u_{2}(t) \fr{2}(\xi(t)) \quad && \qo t
    \in [0, T], \notag \\ 
    & \xi(0) \in \Nor, \qquad \xi(T) \in \Nfr,  \notag\\ 
    & \abs{u_{s}(t)} \leq 1 \quad s = 1,2 && \qo t \in [0, T]. \notag 
  \end{alignat}
The parameter $r$ belongs to some ball $\ball$ centered at the origin of $\R^k$ and radius $R >0$.
For notational simplicity we choose $\R^n$ as state--space;
 All the data are assumed to be smooth, more precisely the maps
\begin{equation*}
(r,x) \in \ball \times \R^n \mapsto \fr{i}(x) \in \R^n, \qquad i =0,1, 2
\end{equation*}
are assumed to be $C^2$ and the submanifolds of the initial and final constraints are given as regular intersections of zero-level sets of $C^2$ maps from $\ball \times \Rn$ to $\R$, i.e.
\begin{equation*}
\begin{split}
	\Nor  \colon & \Phior{i}(x) = 0 \qquad \forall i= 1, \ldots,
      n-n_0 ,  \\
& \uD\Phior{i}(x) \text{ are linearly independent at } x \quad \forall (r,x) \in \ball\times \Rn.
\end{split}
\end{equation*} %
and 
\begin{equation*}
\begin{split}
	\Nfr \colon & \Phifr{j}( x) = 0 \qquad \forall j= 1, \ldots,
      n-n_f, \\
      & \uD\Phifr{j}(x) \text{ are linearly independent at } x \quad \forall (r,x) \in \ball\times \Rn.
\end{split}
\end{equation*}%
We are interested in state-local optimisers according to the following definition:
\begin{definition}[state-local optimality] \label{def:stateloc} 
  The trajectory $\xi$ of an admissible triple $(T, \xi, u)$ for problem \eqref{eq:problemar}  is a state-local minimiser of such problem if there are
  neighbourhoods $\cU$ of its range $\xi([0, T])$, $\cU_0$ of
  $\xi(0)$ and $\cU_f$ of $\xi(T)$ such that $\xi$ is a minimum time trajectory   among the admissible trajectories of \eqref{eq:problemar} whose range is in $\cU$, whose
  initial point is in $\Nor \cap \cU_0$ and whose final point is in
  $\Nfr \cap \cU_f$.
\end{definition}
\begin{remark}
Notice that state-local optimality is a kind of strong local optimality, in the sense that there is no localisation with respect to the control, but only with respect to the trajectories. Moreover state-local optimality is stronger than the classical notion of strong-local optimality where one considers the $C^ 0$ distance between trajectories, i.e.~one considers only triples $\left( T, \xi, u \right)$ where the graph of the trajectory $\xi$ is close to the graph of the reference trajectory $\wxi$.
\end{remark}
Assuming that $\reftri$ satisfies normal PMP, the sufficient conditions for state-local optimality as stated in \cite{PSp15} and a controllability assumption which ensures the uniqueness of the adjoint covector, we prove that for small $R$ each problem \eqref{eq:problemar}, $r \in \ball$ has a state-local optimal trajectory $(\Tr, \xir, \ur)$ (with adjoint covector $\lar$) where $\ur$ preserves the bang-bang structure of $\wh u$ and $\Tr$ is close to $\wh T$. Moreover $\lar$ is the only Pontryagin extremal of \eqref{eq:problemar} whose graph is close to the graph of $\wla$.

\section{Notation}
\label{sec:notation}
We are going to use some basic notions from symplectic geometry.  For
any manifold $N \subset \Rn$ and any $x \in N$, the tangent space and
the cotangent space to $N$ in $x$ are denoted as $T_xN$ and $T^*_xN$,
respectively.  We recall that the cotangent bundle $T^*\Rn$ to $\Rn$
can be identified with the Cartesian product $\Rns \times \Rn = T^*_x\Rn \times T_x\Rn$ for any $x \in \Rn$.  The
projection from $T^*\Rn$ onto $\Rn$ is denoted as $\pi \colon \ell \in
T^*\Rn \mapsto \pi\ell \in \Rn$.  We shall write $T_x\R^n$ instead of $\R^n$, to emphasize the
fact that we are dealing with tangent vectors.%


The canonical Liouville one--form $\unoforma 
$ on $T^*\Rn$ and the associated canonical symplectic two-form $\bsi =
\ud\unoforma$ allow to associate to any, possibly time-dependent,
smooth Hamiltonian $F_t \colon T^*\Rn \rightarrow \R$, the unique
Hamiltonian vector field $\vF{t}$ such that
\begin{equation*}
\bsi(v,\vF{t}(\ell))=\scal{ \ud F_t(\ell)}{v} ,\quad \forall v\in
T_{\ell}T^*\Rn .
\end{equation*}
Choosing coordinates $\ell = (p, x) \in \Rns \times \Rn$, we have
\begin{equation*}
\vF{t}(p,x) = \left( \dfrac{- \, \partial F_t}{\partial x},
  \dfrac{\partial F_t}{\partial p} \right)(p,x).
\end{equation*}
To any vector field $f \colon \Rn \to T\Rn$ we associate the Hamiltonian
function $F$ 
\begin{equation*}
F \colon \ell \in T^*\Rn \mapsto \scal{\ell}{f(\pi\ell)} \in \R,
\end{equation*}
so that
  $\vF{}(p,x) = \big( -p \ud f(x), f(x) \big)$.
  
We denote by $ \wh{f_{t}} $ the piecewisely time-dependent vector field
associated to the reference control:
\begin{equation*}
\wh {f_{t}} := f_{0} + \wh u_{1}(t) f_{1} + \wh u_{2}(t) f_{2}
\end{equation*}
and by $h_1$, $h_2$ its restrictions to the time intervals $[0,
\wtau)$ and $(\wtau, \wT]$, respectively:
\begin{equation*}
h_{1} := \left. \wh {f_{t}} \right\vert_{[0, \wtau)} = f_{0} - f_{1} -
f_{2} , \qquad h_{2} := \left. \wh {f_{t}} \right\vert_{(\wtau, \wT]}
= f_{0} + f_{1} + f_{2} .
\end{equation*}
In what follows we shall also neeed the vector fields
\begin{equation*}
\begin{split}
  & k_{1} := f_{0} + f_{1} - f_{2} = h_{1} + 2 f_{1} = h_{2} - 2 f_{2} , \\
  & k_{2} := f_{0} - f_{1} + f_{2} = h_{1} + 2 f_{2} = h_{2} - 2 f_{1}
  .
\end{split}
\end{equation*}
The associated Hamiltonian functions are denoted by the same letter,
but capitalized. Namely
 \begin{alignat*}{2}
   & H_1(\ell) := \scal{\ell}{h_1(\pi\ell)}, \qquad && H_2(\ell) := \scal{\ell}{h_2(\pi\ell)}, \\
   & K_1(\ell) := \scal{\ell}{k_1(\pi\ell)}, \qquad && K_2(\ell) :=
   \scal{\ell}{k_2(\pi\ell)}.
 \end{alignat*}
Analougously we define the parameter dependent vector fields
\begin{alignat*}{2}
&\hr{1} := \fr{0} - \fr{1} - \fr{2} , \qquad && \hr{2} := \fr{0} + \fr{1} + \fr{2} , \\
&\kr{1} := \fr{0} + \fr{1} - \fr{2} , \qquad && \kr{2} := \fr{0} - \fr{1} + \fr{2} ,
\end{alignat*}
and the associated parameter dependent Hamiltonians
\begin{alignat*}{2}
&\Hr{1} := \Fr{0} - \Fr{1} - \Fr{2} , \qquad && \Hr{2} := \Fr{0} + \Fr{1} + \Fr{2} , \\
&\Kr{1} := \Fr{0} + \Fr{1} - \Fr{2} , \qquad && \Kr{2} := \Fr{0} - \Fr{1} + \Fr{2} .
\end{alignat*}

The maximised Hamiltonian of the nominal control system \eqref{eq:problema0} is
well defined in the whole cotangent bundle $T^*\R^n$ and is denoted by
$H^{\max}$:
\begin{equation*}
\begin{split}
H^{\max}(\ell) := & \max \left\{ F_0(\ell) + u_1 F_1(\ell) + u_2
  F_2(\ell) \colon (u_1, u_2) \in [-1, 1]^2 \right\} \\ 
   = & F_0(\ell) + \abs{F_1(\ell)} + \abs{F_2(\ell)}.
\end{split}
\end{equation*}
Throughout the paper, the symbol $\cO(x)$ denotes a neighborhood of
$x$ in its ambient space.  The flow starting at time $t = 0$ of the
time-dependent vector field $ \wh {f_{t}}$ is defined in a
neighborhood $\cO(\wxo)$ for any $t\in[0, \wT]$ and is denoted by $\wS{t} \colon \cO(\wxo) \to \Rn$, i.e.~
\begin{equation*}
\ddt \wS{t}(x) = \wh {f_{t}}\circ\wS{t}(x) \quad \qo t \in [0,
\wT], \qquad \wS{0}(x) = x.
\end{equation*}
We denote by $\wxo := \wxi(0)$ and by $\wxf := \wxi(\wT) = \wS{T}(\wxo)$ the end points of the reference trajectory and by $\wxd := \wxi(\wtau) = \wS{\wtau}(\wxo)$ the point corresponding to the switching time. 

Given a smooth function $\gamma \colon \cO(x) \subset \Rn \to \R$ and a vector $\dx \in T_x\Rn$, the Lie derivative of $\gamma$ with respect to the vector $\dx$ at the point $x$ is denoted by 
$\liede{\dx}{\gamma}{x}$, i.e.~$\liede{\dx}{\gamma}{x} = \scal{\uD\gamma(x)}{\dx}$. If $f \colon \cO(x) \to T\Rn$ is a smooth vector field, then $\liede{f}{\gamma}{x}$ is the Lie derivative of $\gamma$ at $x$ with respect to the vector $f(x)$, i.e.~$\liede{f}{\gamma}{x}  := \scal{\uD\gamma(x)}{f(x)}$.

Finally, given two smooth vector fields $f, \ g \colon \Rn \to T\Rn$, then the Lie bracket $\liebr{f}{g}$ is given by the vector field $(\uD g) f - (\uD f) g$.
\section{Assumptions}
\label{sec:assumptions}
We now state the assumptions on the nominal extremal triple $\reftri$ of \eqref{eq:problema0}. Besides the necessary conditions for optimality, namely Pontryagin Maximum
Principle (PMP) --which we assume to hold in its normal form-- we require that the triple $\reftri$ satisfies the conditions that ensure state-local optimality, as stated in \cite{PSp15}: regularity along the bang arcs, regularity at the switching time and the coercivity of the second order variation associated to some finite-dimensional subproblem of the given one. 
Moreover we assume that the nominal problem \eqref{eq:problema0} is controllable along $\wxi$.
\begin{assumption}[Normal PMP]
  \label{ass:PMP}
There exists an absolutely continuous curve $\wla\colon [0, \wT] \to T^*\Rn$ satisfying the following properties
\begin{subequations}
  \begin{alignat}{2}
    & \pi\wla(t) = \wxi(t), \qquad &&\forall t \in [0, \wT], 
    \displaybreak[0] \\
    \qquad
    &\dot{\wla}(t) = \overrightarrow{\wh F}_t(\wla(t)), \qquad &&\qo t \in [0, \wT], 
      \displaybreak[0] \\
    & \wh F_t (\wla(t)) = H^{\max}(\wla(t)) = 1, \quad &&\qo t
    \in [0, \wT],     \label{eq:maxi} \displaybreak[0] \\
    & \left. \wla(0)\right\vert_{T_{\wxo}N_0} = 0 , \quad 
\left. \wla(\wT)\right\vert_{ T_{\wxf}N_f}  = 0  . \qquad &&  \label{eq:trans} 
  \end{alignat}
  \end{subequations}
\end{assumption}
In coordinates we put  $\wla(t) := \left( \wmu(t), \wxi(t) \right)$ where $\wmu(t) \in T^*_{\wxi(t)}\Rn \quad \forall t \in [0, \wh T]$.

Here and in what follows we shall use the following notation:
\begin{alignat*}{3}
& \lo := \wla(0), \quad 
&& \ld := \wla(\wtau), \quad 
&& \lf := \wla(\wT), \\
& \wpo := \wmu(0),  \quad
&& \wh p_d := \wmu(\wtau), \quad 
&& \wpf := \wmu(\wT).
\end{alignat*} %

\begin{remark}
The flow starting at time $t = 0$ of the time-dependent Hamiltonian vector field associated to $\wh {F_{t}}(\ell) := \scal{\ell}{\wh {f_t}(\pi\ell)}$ is defined in a neighborhood $\cO(\lo)$ of $\lo$ for any $t \in [0, \wh T]$ and is denoted by $\cFref_t \colon \cO(\lo) \to T^*\Rn$:
\begin{equation*}
\ddt \cFref_t(\ell) = \vFref{t} \circ\cFref_t(\ell) \quad \qo t \in [0, \wh T], \qquad \cFref_0(\ell) = \ell.
\end{equation*}
\end{remark}
\begin{remark}\label{rem:mu}
The adjoint covector $\wmu$ is a solution to the ODE
\begin{equation*}
\dot\mu(t) =   \dfrac{- \, \partial F_t}{\partial x}\left( \mu(t), \wxi(t) \right)
= - \, \scal{\mu(t)}{\ud \wh f_t(\wxi(t))}
\end{equation*}
so that $\wmu(t) = \wpo \wSinv{t\, *} \quad \forall t \in [0, \wT]$ and equations \eqref{eq:trans} read
\begin{equation*}
\wpo = \sum_{i=1}^{n-n_0} \wh a_i \uD \Phio{i}(\wxo), \qquad 
\wpf = \sum_{j=1}^{n-n_f} \wh b_j \uD \Phif{i}(\wxf), \qquad 
\end{equation*}
for some $\wh a = \begin{pmatrix}
\wh a_1, \ldots, \wh a_{n-n_0}
\end{pmatrix} \in \R^{n-n_0}$, $\wh b = \begin{pmatrix}
\wh b_1, \ldots, \wh b_{n-n_f}
\end{pmatrix} \in \R^{n-n_f}$.
\end{remark}
\begin{remark}\label{rem:PMP}
  As $\wla$ is a normal extremal 
  then the
  transversality conditions \eqref{eq:trans} together with the
  maximality condition \eqref{eq:maxi} yield $ h_1(\wxo) \notin
  T_{\wxo} N_0$ and $h_2(\wxf) \notin T_{\wxf} N_f$.
\end{remark}
Maximality condition \eqref{eq:maxi} implies, for any $i=1,2$ and for almost every $t\in [0, \wT]$, 
\begin{equation*}\wh u_i(t) F_i(\wla(t)) = \wh u_i(t) \scal{\wla(t)}{f_i(\wxi(t))} \geq 0.
\end{equation*}%
We assume that the bang arcs of $\wla$ are regular, i.e., we assume
that at each point $\wla(t)$, $t \neq \wtau$, the maximum of the
Hamiltonian is achieved only by $u= \wh u(t) = \left(
\wh u_1(t), \wh u_2(t)\right)$, i.e.,
\begin{multline*}
 F_0(\wla(t)) + u_1 F_1(\wla(t)) + u_2 F_2(\wla(t)) < H^{\max}(\wla(t))  = 1
\\ %
\forall (u_1, u_2) \in [-1,1]^2 \setminus\{ (\wh u_1(t), \wh
u_2(t)) \}.
\end{multline*}
In terms of the controlled Hamiltonians $F_1$ and $F_2$ this can be
stated as follows:
\begin{assumption}[Regularity along the bang arcs] \label{ass:1} Let
  $i = 1, 2$. If $t \neq \wh \tau$, then
\begin{equation}  \label{eq:reg}
\wh{u}_i(t) F_i(\wla(t)) = \wh{u}_i(t) \scal{\wla(t)}{f_i(\wxi(t))} > 0.
  \end{equation}
\end{assumption}
\begin{remark}
\label{re:normale}
Passing to the limit for $t \to \wtau$ in \eqref{eq:reg} we get $F_1(\ld) = F_2(\ld) = 0$ and, because of the normality condition in PMP, $F_0(\ld) = 1$. As a consequence $f_0(\wxd) \notin \Span\{f_1(\wxd), \ f_2(\wxd) \}$.
\end{remark}
\noindent From the necessary maximality condition \eqref{eq:maxi} we
get
\begin{equation*}
  \begin{split}
    & 
    \left.\dfrac{\ud}{\ud t} 2 \, F_i \circ\wla(t)\right\vert_{t =
      \wtau - } \!\!\!\! = \left.\dfrac{\ud}{\ud t}\left( K_i -
        H_{1}\right) \circ\wla(t)\right\vert_{t = \wtau - }
    \!\!\!\! \geq 0, \\
    & 
    \left.\dfrac{\ud}{\ud t} 2 \, F_i \circ\wla(t)\right\vert_{t =
      \wtau + } \!\!\!\! = \left.  \dfrac{\ud}{\ud t}\left( H_{2} -
        K_i \right)\circ\wla(t) \right\vert_{t = \wtau + } \!\!\!\!
    \geq 0,
  \end{split} \qquad i =1, 2 .
\end{equation*}
We assume that the above inequalities are strict:
\begin{assumption}[Regularity at the double switching time]
  \label{ass:3}
  \begin{equation*} 
    \begin{split}
      & \left.\dfrac{\ud}{\ud t}\left( K_\nu - H_{1}
        \right)\circ\wla(t)\right\vert_{t = \wtau - } \!\!\!\!  > 0,
      \qquad \left.  \dfrac{\ud}{\ud t}\left( H_{2} - K_\nu
        \right)\circ\wla(t) \right\vert_{t = \wtau + } \!\!\!\!  > 0,
    \end{split} \quad \nu =1, 2.
  \end{equation*}
\end{assumption}
Assumption \ref{ass:3} is called the {\sc Strong bang-bang Legendre condition for the double switching time}.
Equivalently, this assumption can be expressed in terms
of the Lie brackets of vector fields or in terms of the canonical
symplectic structure $\dueforma{\cdot}{\cdot}$ on $T^*\Rn$.

\begin{proposition}
  Assumption \ref{ass:3} is equivalent to
  \begin{equation*} 
    \begin{split}
      & \scal{\ld}{\liebr{h_{1}}{k_\nu} (\wxd)} =
      \dueforma{\vH{1}}{\vK{\nu}}(\ld)  > 0, \\
      & \scal{\ld}{\liebr{k_\nu}{ h_{2}} (\wxd)} =
      \dueforma{\vK{\nu}}{\vH{2}}(\ld) > 0,
    \end{split}
    \qquad  \nu = 1,2 . 
  \end{equation*}
\end{proposition}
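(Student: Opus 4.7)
The plan is to compute both sides of Assumption~\ref{ass:3} directly, exploiting that $\wla$ is piecewise an integral curve of $\vH{1}$ and $\vH{2}$, and then to rewrite the resulting symplectic pairings as Lie bracket expressions using the linearity of $F,H_i,K_i$ in the fibre variable $p$. I do not expect any real obstacle; the only things that need care are the sign convention $\bsi(v,\vG{}(\ell))=\scal{\ud G(\ell)}{v}$ fixed in the preliminaries, and the one-sided velocity jump of $\wla$ at $\wtau$.

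Concretely, since $\wh f_t = h_1$ on $[0,\wtau)$ and $\wh f_t = h_2$ on $(\wtau,\wT]$, one has $\dot\wla(t)=\vH{1}(\wla(t))$ on the first arc and $\dot\wla(t)=\vH{2}(\wla(t))$ on the second. Hence for any smooth Hamiltonian $G$ on $T^*\Rn$ and any $t$ in the relevant open arc,
\begin{equation*}
\ddt G\circ\wla(t) = \scal{\ud G(\wla(t))}{\vH{i}(\wla(t))} = \dueforma{\vH{i}}{\vG{}}(\wla(t)),
\end{equation*}
by the very definition of the Hamiltonian vector field. Applying this with $G = K_\nu - H_1$ at $t=\wtau^-$ and with $G = H_2 - K_\nu$ at $t=\wtau^+$, and using that $\bsi$ is skew-symmetric so $\dueforma{\vH{i}}{\vH{i}}\equiv 0$, the $\vH{i}$ contributions drop out and I obtain
\begin{equation*}
\left.\ddt (K_\nu - H_1)\circ\wla\right\vert_{\wtau^-} \!\!\!\!= \dueforma{\vH{1}}{\vK{\nu}}(\ld), \qquad \left.\ddt (H_2 - K_\nu)\circ\wla\right\vert_{\wtau^+} \!\!\!\!= \dueforma{\vK{\nu}}{\vH{2}}(\ld).
\end{equation*}

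It then remains to identify these two symplectic quantities with the corresponding Lie-bracket pairings, that is, to verify the general identity
\begin{equation*}
\dueforma{\vF{}}{\vG{}}(\ell) = \scal{\ell}{\liebr{f}{g}(\pi\ell)},
\end{equation*}
valid whenever $F,G$ come from vector fields $f,g$ via $F(\ell)=\scal{\ell}{f(\pi\ell)}$. In coordinates this is a one-line computation using the explicit formula $\vG{}(p,x)=(-p\,\ud g(x),g(x))$ recalled in Section~\ref{sec:notation}, after which one recognises $(\ud g)f-(\ud f)g = \liebr{f}{g}$. Specialising to $(f,g)=(h_1,k_\nu)$ and $(f,g)=(k_\nu,h_2)$ at the point $\ld$ yields the two bracket expressions claimed in the proposition, and the equivalence with Assumption~\ref{ass:3} follows upon noticing that $K_\nu-H_1 = 2F_\nu$ and that $H_2-K_\nu$ equals $2F_2$ or $2F_1$ depending on $\nu$, so that the four inequalities appearing in the assumption exactly match the four strict positivity conditions on the $\dueforma{\vH{1}}{\vK{\nu}}(\ld)$ and $\dueforma{\vK{\nu}}{\vH{2}}(\ld)$ as $\nu$ ranges over $\{1,2\}$.
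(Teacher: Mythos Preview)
Your proof is correct; the paper itself states this proposition without proof, treating it as a routine consequence of the definitions in Section~\ref{sec:notation}, and your computation is precisely the standard one that justifies it. The final remark about $K_\nu - H_1 = 2F_\nu$ etc.\ is correct but not needed here, since Assumption~\ref{ass:3} is already phrased in terms of $K_\nu - H_1$ and $H_2 - K_\nu$ directly.
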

An easy computation  proves the following equivalent condition
\begin{proposition}\label{prop:ass3eq}
Assumption \ref{ass:3}  is equivalent to   
\begin{equation} 
	\scal{\ld}{\liebr{f_0}{f_i} (\wxd)} > \abs{\scal{\ld}{\liebr{f_1}{ f_2} (\wxd)}} ,  \qquad  i = 1,2, \label{eq:absleg2}	
\end{equation}
i.e.
\begin{equation*} 
    \dueforma{\vF{0}}{\vF{i}}(\ld) > \abs{\dueforma{\vF{1}}{\vF{2}}(\ld)} , \qquad  i = 1,2 . 
\end{equation*}
\end{proposition}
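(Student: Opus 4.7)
The statement is a purely algebraic rewriting of Assumption \ref{ass:3}, so the plan is to expand the four Lie brackets $\liebr{h_1}{k_1}$, $\liebr{h_1}{k_2}$, $\liebr{k_1}{h_2}$, $\liebr{k_2}{h_2}$ in terms of $\liebr{f_0}{f_1}$, $\liebr{f_0}{f_2}$ and $\liebr{f_1}{f_2}$, and then observe that the resulting system of four linear inequalities in the pairing $\scal{\ld}{\cdot}$ collapses to the two claimed inequalities.

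First I would substitute the definitions $h_1 = f_0 - f_1 - f_2$, $h_2 = f_0 + f_1 + f_2$, $k_1 = f_0 + f_1 - f_2$, $k_2 = f_0 - f_1 + f_2$ into the four brackets and use bilinearity and skew-symmetry of $\liebr{\cdot}{\cdot}$, together with $\liebr{f_i}{f_i} = 0$. A short bookkeeping yields
\begin{equation*}
\begin{split}
& \liebr{h_1}{k_1} = 2\bigl(\liebr{f_0}{f_1} + \liebr{f_1}{f_2}\bigr), \qquad \liebr{h_1}{k_2} = 2\bigl(\liebr{f_0}{f_2} - \liebr{f_1}{f_2}\bigr), \\
& \liebr{k_1}{h_2} = 2\bigl(\liebr{f_0}{f_2} + \liebr{f_1}{f_2}\bigr), \qquad \liebr{k_2}{h_2} = 2\bigl(\liebr{f_0}{f_1} - \liebr{f_1}{f_2}\bigr).
\end{split}
\end{equation*}

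Next I would introduce the shorthand $A_i := \scal{\ld}{\liebr{f_0}{f_i}(\wxd)}$ for $i = 1,2$ and $B := \scal{\ld}{\liebr{f_1}{f_2}(\wxd)}$. Pairing the displayed identities with $\ld$ and dividing by $2$, Assumption \ref{ass:3} becomes the system
\begin{equation*}
A_1 + B > 0, \qquad A_1 - B > 0, \qquad A_2 + B > 0, \qquad A_2 - B > 0,
\end{equation*}
which is manifestly equivalent to $A_i > \abs{B}$ for $i = 1,2$, i.e.\ to \eqref{eq:absleg2}. The equivalent reformulation in terms of the symplectic form $\bsi$ then follows from the general identity $\scal{\ell}{\liebr{f}{g}(\pi\ell)} = \dueforma{\vF{}}{\vG{}}(\ell)$ already recalled in Section \ref{sec:notation}.

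There is really no obstacle here beyond the bookkeeping of signs in the bracket expansions; once the four expansions are in hand, the equivalence of the two sets of inequalities is immediate. The only point that deserves a line of comment is that the four inequalities of Assumption \ref{ass:3} are labelled by $\nu = 1,2$ and by the side of $\wtau$, producing exactly the four sign combinations $A_i \pm B$ that are needed to force $A_i > \abs{B}$ in both directions.
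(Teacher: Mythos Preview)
Your proof is correct and is exactly the ``easy computation'' the paper alludes to without writing out: expand the four brackets $\liebr{h_1}{k_\nu}$, $\liebr{k_\nu}{h_2}$ in terms of $\liebr{f_0}{f_1}$, $\liebr{f_0}{f_2}$, $\liebr{f_1}{f_2}$, and observe that the resulting four inequalities $A_i \pm B > 0$ are equivalent to $A_i > \abs{B}$. There is nothing to add.
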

In what follows we shall also need to reformulate Assumption
\ref{ass:3} in terms of the pull-backs of the vector fields $h_\nu$ and $k_\nu$ along the reference flow $\wS{t}$ . Define
\begin{equation}
  g_\nu(x) 
  := \wSinv{\wtau \, *} h_\nu  \circ \wS{\wtau }(x),
  \quad
  j_\nu (x) := \wSinv{\wtau\, *} k_\nu \circ \wS{\wtau}(x),
  \qquad \nu =1,2   \label{eq:pullback}
\end{equation}
and let $G_\nu$, $J_\nu$ be the associated Hamiltonians. Then a
straightforward computation yields
\begin{proposition}
  Assumption \ref{ass:3} is equivalent to
  \begin{equation}\label{eq:leg2c}
    \begin{split}
      & \scal{\lo}{ \liebr{g_{1}}{j_\nu} (\wxo)} =
      \dueforma{\vG{1}}{\vJ{\nu}}(\lo)  > 0, \\
      & \scal{\lo}{ \liebr{j_\nu}{ g_{2}} (\wxo)} =
      \dueforma{\vJ{\nu}}{\vG{2}}(\lo) > 0,
    \end{split}
    \qquad  \nu = 1,2 . 
  \end{equation}
\end{proposition}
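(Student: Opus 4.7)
The plan is to reduce the claim to the first equivalent formulation of Assumption \ref{ass:3} already established in the excerpt, namely the one identifying it with positivity of the pairings $\scal{\ld}{\liebr{h_1}{k_\nu}(\wxd)}$ and $\scal{\ld}{\liebr{k_\nu}{h_2}(\wxd)}$. Since by the definition \eqref{eq:pullback} the vector fields $g_\nu$ and $j_\nu$ are precisely the pullbacks of $h_\nu$ and $k_\nu$ along the reference flow $\wS{\wtau}$, the task boils down to transporting those pairings, together with the covector $\ld$, from the base point $\wxd$ back to $\wxo$ and $\lo$ via the inverse of the flow.

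The first ingredient I would use is the naturality of the Lie bracket under a diffeomorphism, $\liebr{\phi^{*} X}{\phi^{*} Y} = \phi^{*} \liebr{X}{Y}$. Applied to $\phi = \wS{\wtau}$, it immediately yields
\begin{equation*}
\liebr{g_1}{j_\nu}(\wxo) = \wSinv{\wtau\, *}\, \liebr{h_1}{k_\nu}(\wxd), \qquad \liebr{j_\nu}{g_2}(\wxo) = \wSinv{\wtau\, *}\, \liebr{k_\nu}{h_2}(\wxd).
\end{equation*}
The second ingredient is the flow description of the adjoint covector provided by Remark \ref{rem:mu}, namely $\wh p_d = \wpo\, \wSinv{\wtau\, *}$, which I read as $\scal{\ld}{v} = \scal{\lo}{\wSinv{\wtau\, *} v}$ for every $v \in T_{\wxd}\Rn$. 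Combining it with the bracket identities above produces $\scal{\ld}{\liebr{h_1}{k_\nu}(\wxd)} = \scal{\lo}{\liebr{g_1}{j_\nu}(\wxo)}$ together with the analogous equality involving $\liebr{k_\nu}{h_2}$ and $\liebr{j_\nu}{g_2}$, from which the equivalence with \eqref{eq:leg2c} follows at once.

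Finally, the symplectic reformulation $\scal{\ell}{\liebr{f}{g}(\pi\ell)} = \dueforma{\overrightarrow{F}}{\overrightarrow{G}}(\ell)$ for Hamiltonians $F$, $G$ linear in the momentum is the standard identification of the Poisson bracket of such Hamiltonians with the Hamiltonian of the Lie bracket of the underlying vector fields, and follows at once from the coordinate expression for $\overrightarrow{F}$ recalled in Section \ref{sec:notation}. No step presents a genuine obstacle: the whole argument is a chase through the definitions, and the only subtlety to watch is the bookkeeping between pushforward of vectors and pullback of covectors, i.e., on which side of $\wSinv{\wtau\, *}$ each factor must be placed.
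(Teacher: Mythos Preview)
Your argument is correct and is exactly the ``straightforward computation'' the paper alludes to: naturality of the Lie bracket under the diffeomorphism $\wS{\wtau}$ together with the transport law $\wh p_d = \wpo\,\wSinv{\wtau\, *}$ from Remark~\ref{rem:mu} immediately carries the inequalities at $\ld$ back to the inequalities at $\lo$, and the symplectic rewriting is the standard identity $\dueforma{\overrightarrow F}{\overrightarrow G}(\ell)=\scal{\ell}{[f,g](\pi\ell)}$ for momentum-linear Hamiltonians.
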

Also, we assume that $\wxi$ has no self-intersection:
\begin{assumption}\label{ass:inje}
The reference trajectory $\wxi \colon [0, \wT ] \to \Rn$ is injective.
\end{assumption}
\section{The second order variation}
\label{sec:secvar}
The second order variation is the second order approximation of a  finite-di\-men\-sio\-nal sub-problem of
\eqref{eq:problema0} obtained by keeping the same end-point constraints and
restricting the set of admissible controls. Namely, we allow for
independent variations of the switching times of each of the two
reference control components $\wh u_1$ and $\wh u_2$.
This sub-problem is then extended by allowing for variations of the
initial points of trajectories on a neighborhood of $\wxo$ in
$\Rn$. We penalise the latter variations with a smooth cost $\a$ that
vanishes on $N_0$.

We allow for perturbations of the final time, of the initial point of
trajectories on $N_0$, of the final point on $N_f$ and of the
switching time of either component of the reference control: let
$\tau_{1} := \wtau + \e_{1}$ and $\tau_{2} := \wtau + \e_{2}$ be the
perturbed switching times of the first and of the second component of
$\wh u$, respectively, and let $\tau_{3} := \wT + \e_{3}$ be the
perturbation of the final time $\wT$. 

Let $\a \colon \Rn \to \R$ be a smooth nonnegative function vanishing
on $N_0$. We remove the constraint on the initial point $\xi(0)$
introducing the penalty cost $\a$ on such point. We thus obtain the
following problem in the unknowns $x$, $\e_1$, $\e_2$, $\e_3$:
\begin{subequations}\label{eq:prob4}
  \begin{align}
    & \a(x) + \wT + \delta_{3} \to
    \min, 
    \\
    & \dot\xi =
    \begin{cases}
      h_{1}(\xi(t)) \quad & t \in (0, \wtau + \delta_{1}), \\
      k_{\nu}(\xi(t)) \quad & t \in ( \wtau + \delta_{1}, \wtau + \delta_{2}), \\
      h_{2}(\xi(t)) \quad & t \in ( \wtau + \delta_{2},
      \wT + \delta_{3}),
    \end{cases} 
    \\
    & \xi(0) = x \in \Rn, \qquad \xi(\wT + \delta_{3}) \in N_{f}, 
    \\
    & \delta_{1} := \min \{\e_{1}, \e_{2}\}, \quad \delta_{2} := \max
    \{\e_{1}, \e_{2}\}, \quad \delta_{3} := \e_{3}, \\
    & \nu =
    \begin{cases}
      1 \quad &\text{if } \e_{1} \leq \e_{2}, \\
      2 \quad &\text{if } \e_{1} > \e_{2}.
    \end{cases}
  \end{align}
\end{subequations}
%
%
Let $g_\nu$, $j_\nu$, $\nu =1,2$ be the pullbacks along the reference
flow of the vector fields $h_\nu$ and $k_\nu$, as defined in equation
\eqref{eq:pullback}. %
Let $\wh N_f$ be the pullback of $N_f$ to time $t= 0$ along the reference flow:
\begin{equation*}
  \wh N_f := \wSinv{\wT}(N_f)
\end{equation*}
and let $T_{\wxo} \wh N_f = \wSinv{\wT\, *}(T_{\wxf}N_f)$
be its tangent space at $\wxo$. \\
By the transversality condition \eqref{eq:trans} at the reference
final time $\wT$, there exists a smooth function $\b \colon \Rn \to
\R$ that vanishes on $N_f$ and such that $\ud\b(\wxf) = -\lf$. Also
let $\wbe$ be the pull-back of $\b$ along the reference flow, $\wbe :=
\b \circ \wh{S}_{\wT}$ so that, by Remark \ref{rem:mu},
\begin{equation*}
\wbe \colon \cO(\wxo) \to \R, \quad \left. \wbe \right\vert_{\cO(\wxo)o  \cap \wh N_f} \equiv 0, \qquad \ud\wbe(\wxo) = -\wpo.
\end{equation*}
Let us set
\begin{equation*}
a_{1} := \delta_{1}, \qquad b := \delta_{2} - \delta_{1} = \abs{\e_{2}
  - \e_{1}} , \qquad a_{2} := \delta_{3} - \delta_{2};
\end{equation*}
then the second order approximations of problem \eqref{eq:prob4}, for
$\nu=1, 2$, are defined on the closed half-spaces
\begin{multline*}
\qquad V^{+}_{\nu} := \left\{ (\dx, a_{1}, b, a_{2}) \in \Rn \times \R \times
  \R^{+} \times \R \colon \right. \\ 
  \left. \dx + a_{1}g_{1}(\wxo) + b \, j_{\nu}(\wxo)
  + a_{2}g_{2}(\wxo) \in T_{\wxo} \wh N_f \right\}
  \qquad 
\end{multline*}

and are given by
\begin{equation}
  \label{eq:secvar0}
  \begin{split}
    J\se_{\nu} & [\dx, a_{1}, b, a_{2}] = \uD^{2} ( \a + \wh
    {\b})(\wxo)[\dx]^{2} + 2 \, \dx \cdot (a_{1}g_{1} + b \, j_{\nu} +
    a_{2}g_{2})\cdot \wh {\b}(\wxo)  \\ 
    & + (a_{1}g_{1} + b \, j_{\nu} +
    a_{2}g_{2})^{2}\cdot\wh {\b}(\wxo)
    \\
    & + a_{1}b \, \liebr{g_{1}}{j_{\nu}} \cdot \wh {\b}(\wxo) +
    a_{1}a_{2} \liebr{g_{1}}{ g_{2}} \cdot \wh {\b}(\wxo) + b \, a_{2}
    \liebr{j_{\nu}}{ g_{2}} \cdot \wh {\b}(\wxo),
  \end{split}
\end{equation}
see \cite{PSp11} for the construction.  The restrictions of
$J\se_{\nu}$ to the sets
\begin{multline*}
V_{0, \, \nu}^{+} := \left\{ (\dx, a_{1}, b, a_{2}) \in T_{\wxo} N_0
  \times \R \times \R^{+} \times \R \colon \right. \\ 
  \left.  \dx + a_{1}g_{1}(\wxo) + b
  \, j_{\nu}(\wxo) + a_{2}g_{2}(\wxo) \in T_{\wxo} \wh N_f \right\},
\qquad \nu=1,2,
\end{multline*}
are indeed the second order approximation of \eqref{eq:problema0}.

We are now in a position to state our  assumption on the second order approximation of sub-problem \eqref{eq:prob4}.
\begin{assumption}\label{ass:coerc}
  For each $\nu = 1, 2$, $J\se_\nu$ is coercive on $V^+_{0, \nu}$.
\end{assumption}
\noindent Since both $J\se_{1}$ and $J\se_{2}$ are quadratic forms, we
may as well remove the constraint $b \geq 0$ and let them be defined and coercive
on the linear spaces
\begin{multline}
  \label{eq:Vnu}
  V_{0,\nu} := \left\{ (\dx, a_{1}, b, a_{2}) \in T_{\wxo}N_0 \times \R^{3} \colon \right. \\ 
\left.     \dx + a_{1}g_{1}(\wxo) + b \, j_{\nu}(\wxo) + a_{2}g_{2}(\wxo) \in
    T_{\wxo}\wh N_f \right\}, \qquad \nu =1, 2.  
\end{multline}
Also let
\begin{multline}
V_{\nu} := \left\{ (\dx, a_{1}, b, a_{2}) \in \Rn \times
  \R^{3}  \colon \right. \\ 
  \left. \dx + a_{1}g_{1}(\wxo) + b \, j_{\nu}(\wxo) +
  a_{2}g_{2}(\wxo) \in T_{\wxo}\wh N_f \right\}, \qquad \nu =1, 2.
\end{multline}
By \cite{Hes51} we obtain the following:
\begin{theorem}\label{thm:coercalpha}
  If the second order approximations $J\se_{1}$ and $J\se_{2}$
  are coercive on $V_{0, \, 1}$ and $V_{0, \, 2}$ respectively, then
  there exists a smooth function $\a \colon \Rn \to \R$ such that
  $\left. \a \right\vert_{N_0} \equiv 0$, $\da(\wxo) =\lo$ and both
  $J\se_{1}$ and $J\se_{2}$ are coercive quadratic forms on $V_{1}$
  and $V_{2}$, respectively.
\end{theorem}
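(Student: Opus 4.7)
The statement is a classical Hestenes-type extension: given coercivity of a quadratic form on a subspace cut out by linear constraints, a suitable positive semidefinite penalty vanishing exactly on that subspace makes the form coercive on the larger ambient space. My plan is to realise such a penalty by a careful choice of $\a$, exploiting the fact that $\a$ enters $J\se_\nu$ only through the Hessian $\uD^2\a(\wxo)[\dx]^2$.

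First I would observe that the restriction of $J\se_\nu$ to $V_{0,\nu}$ is independent of the particular $\a$ satisfying $\left.\a\right\vert_{N_0}\equiv 0$ and $\da(\wxo) = \lo$: twice differentiating $\a\circ\gamma\equiv 0$ along any smooth curve $\gamma\subset N_0$ with $\gamma(0)=\wxo$, $\gamma'(0)=\dx\in T_{\wxo}N_0$ shows that $\uD^2\a(\wxo)[\dx]^2$ is determined by $\lo$ and the second fundamental form of $N_0$ at $\wxo$, so the hypothesis is intrinsic. Using Remark \ref{rem:mu}, I would then build an explicit family
\[
\a_0(x) := \sum_{i=1}^{n-n_0}\wh a_i\,\Phio{i}(x), \qquad
\a_K(x) := \a_0(x) + \tfrac{K}{2}\sum_{i=1}^{n-n_0}\bigl(\Phio{i}(x)\bigr)^2,
\]
for which $\left.\a_K\right\vert_{N_0}\equiv 0$, $\uD\a_K(\wxo) = \lo$, and $\uD^2\a_K(\wxo)[\dx]^2 = \uD^2\a_0(\wxo)[\dx]^2 + K\sum_i(\uD\Phio{i}(\wxo)\dx)^2$. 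Denoting by $J\se_{\nu,K}$ the form \eqref{eq:secvar0} obtained with $\a = \a_K$, one has $J\se_{\nu,K}[z] = J\se_{\nu,0}[z] + K\sum_i(\uD\Phio{i}(\wxo)\dx)^2$ for every $z = (\dx,a_1,b,a_2) \in V_\nu$; the added term is a positive semidefinite quadratic form on $V_\nu$ whose null space is exactly $V_{0,\nu}$.

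The heart of the argument is to show that $J\se_{\nu,K}$ is coercive on $V_\nu$ for $K$ large, for both $\nu=1,2$. I would argue by contradiction: if coercivity fails for every $K$, then for some $\nu$ there exist $K_n\to +\infty$ and unit vectors $z_n\in V_\nu$ with $J\se_{\nu,K_n}[z_n]\to 0$. By finite-dimensional compactness, up to a subsequence $z_n\to z^*\in V_\nu$ with $\|z^*\|=1$. Since $J\se_{\nu,0}$ is bounded on the unit sphere of $V_\nu$, dividing $J\se_{\nu,K_n}[z_n]$ by $K_n$ forces $\sum_i(\uD\Phio{i}(\wxo)\dx^*)^2=0$, that is $\dx^*\in T_{\wxo}N_0$, hence $z^*\in V_{0,\nu}$. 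Passing to the limit in the inequality $J\se_{\nu,0}[z_n]\leq J\se_{\nu,K_n}[z_n]$ yields $J\se_{\nu,0}[z^*]\leq 0$, contradicting the coercivity of $J\se_{\nu,0}|_{V_{0,\nu}} = J\se_\nu|_{V_{0,\nu}}$ assumed in the statement. Choosing $K$ larger than the two constants that work for $\nu=1$ and $\nu=2$ separately produces the required $\a:=\a_K$. The only delicate point is the compactness step, which in this finite-dimensional setting is routine.
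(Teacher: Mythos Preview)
Your argument is correct. The paper does not actually give a proof of this theorem: it simply states ``By \cite{Hes51} we obtain the following'' and quotes the result. What you have written is precisely the classical Hestenes penalty construction, carried out explicitly in the present setting, so you have supplied the details the paper delegates to the reference. Your preliminary observation that $\uD^2\a(\wxo)[\dx]^2$ restricted to $T_{\wxo}N_0$ depends only on $\lo$ and the second fundamental form of $N_0$ is the right way to see that the hypothesis is intrinsic, and the choice $\a_K=\sum_i\wh a_i\Phio{i}+\tfrac{K}{2}\sum_i(\Phio{i})^2$ is the standard one.

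One small wording correction in the compactness step: failure of coercivity of $J\se_{\nu,K_n}$ on the finite--dimensional space $V_\nu$ gives unit vectors $z_n$ with $J\se_{\nu,K_n}[z_n]\leq 0$, not ``$\to 0$''. This inequality is in fact exactly what you use afterwards: boundedness of $J\se_{\nu,0}$ on the unit sphere together with $J\se_{\nu,K_n}[z_n]\leq 0$ forces the penalty term to vanish in the limit, and then $J\se_{\nu,0}[z_n]\leq J\se_{\nu,K_n}[z_n]\leq 0$ yields $J\se_{\nu,0}[z^*]\leq 0$, contradicting coercivity on $V_{0,\nu}$. With that adjustment the proof is complete.
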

\noindent The main result of \cite{PSp15} is the following:
\begin{theorem}
  \label{thm:main}
  Assume $\reftri$ is an admissible triple for the minimum time
  problem \eqref{eq:problema}. Assume the triple is bang-bang with
  only one switching time which is a double switching time. Assume the
  triple satisfies PMP, the regularity
  assumption along the bang arcs (Assumption \ref{ass:1}), the
  regularity assumption at the double switching time (Assumption
  \ref{ass:3}) and the coercivity assumption (Assumption
  \ref{ass:coerc}). Moreover assume the trajectory $\wxi$ is
  injective.  Then, $\wxi$ is a strict state-locally optimal
  trajectory.
\end{theorem}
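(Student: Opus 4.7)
\medskip

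\noindent\textbf{Plan of proof.} The approach is the Hamiltonian/symplectic one developed in \cite{FPS09,PSp11,PS11}: construct an invariant Lagrangian manifold along the reference lift $\wla$, read off a calibrating ``value'' function from its projection to $\Rn$, and use it to dominate the time cost of any admissible competitor whose trajectory lies in a small neighbourhood of $\wxi([0,\wT])$. The difference with the simple switch case is that the double switch forces a two-branch construction.

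First, I would replace the endpoint constraint at $t=0$ by a smooth penalty: by Theorem \ref{thm:coercalpha} there is a smooth $\alpha \colon \Rn \to \R$ vanishing on $N_0$ with $\da(\wxo) = \lo$ and such that the second order forms $J\se_1, J\se_2$ are coercive on the full linear spaces $V_1, V_2$. It then suffices to show that, for any admissible triple $(T, \xi, u)$ of \eqref{eq:problema0} whose trajectory lies in a neighbourhood $\cU$ of $\wxi([0, \wT])$ and with $\xi(T)$ close to $\wxf$, the cost $\alpha(\xi(0)) + T$ is strictly greater than $\alpha(\wxo) + \wT$ unless $(T, \xi, u) = \reftri$.

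Next, I introduce the overmaximized dynamics in which the two components of the reference control are allowed to switch independently. Fixing an ordering of the perturbed switching times yields two cases, indexed by $\nu = 1, 2$; in each case the corresponding Hamiltonian flow concatenates $\vH{1}$, $\vK{\nu}$ and $\vH{2}$. I flow forward the horizontal Lagrangian submanifold $\Lambda_0 := \{\ud\alpha(x) \colon x \in \cO(\wxo)\} \subset T^*\Rn$, which contains $\lo$ by construction, through this concatenation with free times $a_1, b, a_2$ parameterising the three arcs. This produces, for each $\nu$, a smooth $(n_0+3)$-parameter family of covectors whose base projection $\Psi_\nu$ I then try to show is a local diffeomorphism onto a neighbourhood $\cU_\nu$ of $\wxi([0,\wT])$. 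A direct computation, essentially the one sketched in \cite{PSp11}, identifies the Hessian of the associated finite-dimensional problem with $J\se_\nu$ plus precisely the linear terms contributed by Assumption \ref{ass:3} via the brackets $\liebr{g_1}{j_\nu}, \liebr{j_\nu}{g_2}, \liebr{g_1}{g_2}$; coercivity of $J\se_\nu$ on $V_\nu$ then gives non-degeneracy of $\Psi_\nu$ and so a well-defined local value function $w_\nu$ on $\cU_\nu$ whose differential lies in the flowed manifold.

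The standard calibration argument closes the proof. On each $\cU_\nu$ the Cartan 1-form $\unoforma - \ud t$ is exact on the extended Lagrangian image, and the maximality condition \eqref{eq:maxi} together with regularity along the bang arcs (Assumption \ref{ass:1}) ensures that its integral along the lift of any admissible competitor is bounded below by the integral along the reference. A Stokes-type comparison then yields $\alpha(\xi(0)) + T \ge \alpha(\wxo) + \wT$, with equality only for the reference triple. The two branches $\nu = 1, 2$ coincide on the open subset where $b = 0$, so the two estimates paste into a uniform bound on $\cU := \cU_1 \cup \cU_2$; Assumption \ref{ass:inje} is used, via a compactness argument on $[0,\wT]$, to shrink $\cU$ so that distinct times along $\wxi$ cannot be collapsed by a competitor.

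The hardest step is the one that turns coercivity of $J\se_\nu$ on the finite-dimensional space $V_\nu$ into non-singularity of $\Psi_\nu$ at the double switching point. This is where the extra parameter $b$ paying for the collapse of two switching times into one enters, and it is exactly why Assumption \ref{ass:coerc} asks for coercivity on a space with \emph{two} free time parameters per branch, rather than one as in the simple-switch case; verifying that the cross-terms coming from $\dueforma{\vH{1}}{\vK{\nu}}(\ld)$ and $\dueforma{\vK{\nu}}{\vH{2}}(\ld)$ combine with the pure-state part $\uD^2(\alpha + \wh\beta)(\wxo)$ to produce a definite quadratic form on $V_\nu$ is the delicate technical core of the argument.
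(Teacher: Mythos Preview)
There is nothing to compare against: Theorem~\ref{thm:main} is not proved in this paper. It is introduced with the sentence ``The main result of \cite{PSp15} is the following'' and is quoted here only as background for the structural stability results (Theorems~\ref{thm:main1} and~\ref{thm:main2}); the present paper gives no proof of it.

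That said, your outline is a faithful sketch of the Hamiltonian approach that \cite{PSp15} actually carries out (and that the introduction of the present paper alludes to). Two small inaccuracies worth flagging. First, the parameter count: once you have replaced the $N_0$-constraint by the penalty $\alpha$, the initial Lagrangian $\{\ud\alpha(x)\colon x\in\cO(\wxo)\}$ is $n$-dimensional, so the flowed family carries $n+3$ parameters (not $n_0+3$), and one then shows that the projection, restricted to the appropriate level set, is a local diffeomorphism onto a neighbourhood of the graph of $\wxi$. Second, in your last paragraph you describe Assumption~\ref{ass:coerc} as asking for coercivity ``with two free time parameters per branch''; in fact each $V_\nu$ carries three time parameters $(a_1,b,a_2)$, and it is the middle parameter $b$ (the length of the $k_\nu$-arc) that distinguishes the double-switch situation from a simple switch. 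These are cosmetic; the architecture of your argument---penalty $\alpha$ via Theorem~\ref{thm:coercalpha}, two-branch overmaximised flow, exactness of the pulled-back Cartan form, Stokes comparison, injectivity to globalise---is exactly the one used in \cite{PSp15}.
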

%

\section{The controllability assumption}
In order to prove our structural stability result we need one further assumption which was not required in \cite{PSp15}, i.e. controllability of the nominal problem \eqref{eq:problema0} along the reference trajectory $\wxi$.
\begin{assumption}\label{ass:control}
$\wmu$ is the only adjoint covector associated to 
 $\wxi$.
\end{assumption}
The controllability assumption can in fact be stated in terms of the data of the nominal  problem \eqref{eq:problema0}.
For any $i=0, 1, 2$, let $\wt f_i$ be the pull-back of $f_i$ along the reference flow from the double switching time $\wtau$ to time $0$:
\begin{equation*}
\wt f_i (x) := \wS{\wtau*}^{-1}f_i \circ \wS{\wtau}(x) = \exp(-\wtau h_1)_* f_i \circ \exp \wtau h_1(x).
\end{equation*}
\begin{lemma}\label{le:control}
Assumption \ref{ass:control} holds if and only if 
\begin{equation*}
 \Span \left\{
T_{\wxo}N_0, \ T_{\wxo} \wh N_f, \wt f_0 (\wxo)
, \wt f_1 (\wxo), \wt f_2 (\wxo)\right\} = \Rn.
\end{equation*}
\end{lemma}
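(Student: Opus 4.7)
The plan is to parametrize every candidate adjoint covector along $\wxi$ by its initial value $p_0 := \mu(0) \in T^*_{\wxo}\Rn$ via the formula $\mu(t) = p_0 \wSinv{t\,*}$ of Remark \ref{rem:mu}, and then to translate every requirement imposed by normal PMP into an affine-linear equation on $p_0$. Uniqueness of the adjoint then becomes a trivial-kernel question for a linear system in $\Rns$, which is exactly what the span condition encodes.

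The four linear conditions on $p_0$ are: (i)~$p_0|_{T_{\wxo}N_0} = 0$, from the initial transversality in \eqref{eq:trans}; (ii)~$p_0|_{T_{\wxo}\wh N_f} = 0$, obtained from the final transversality after applying $\wSinv{\wT\,*}$ and recalling $T_{\wxo}\wh N_f = \wSinv{\wT\,*}(T_{\wxf}N_f)$; (iii)~$\scal{p_0}{\wt f_i(\wxo)} = 0$ for $i=1,2$, from the mandatory vanishing at the double switch $\wtau$ of the switching functions $F_i\circ\lambda$, after using
\begin{equation*}
F_i(\lambda(\wtau))=\scal{p_0\wSinv{\wtau\,*}}{f_i\circ\wS{\wtau}(\wxo)}=\scal{p_0}{\wt f_i(\wxo)};
\end{equation*}
(iv)~$\scal{p_0}{\wt f_0(\wxo)}=1$, from the normalization $\wh F_t(\lambda(t))\equiv 1$ evaluated at $\wtau$ combined with (iii). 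Conversely, any $p_0$ in a small enough neighbourhood of $\wpo$ satisfying (i)--(iv) yields a genuine adjoint: the bang-arc maximality $\wh u_i(t)F_i(\lambda(t))\geq 0$ holds strictly at $\wla$ away from $\wtau$ by Assumption \ref{ass:1}, its preservation across $\wtau$ is controlled by the strict derivative signs of Assumption \ref{ass:3}, and compactness of $[0,\wT]$ gives the persistence under small perturbations of $\wpo$.

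At this point the lemma reduces to linear algebra: the affine system (i)--(iv) has $\wpo$ as one solution, so its full solution set is the affine translate $\wpo+W^{\perp}$ of the annihilator of
\begin{equation*}
W:=\Span\{T_{\wxo}N_0,\ T_{\wxo}\wh N_f,\ \wt f_0(\wxo),\ \wt f_1(\wxo),\ \wt f_2(\wxo)\}\subset T_{\wxo}\Rn.
\end{equation*}
Hence uniqueness of the adjoint is equivalent to $W^{\perp}=\{0\}$, i.e.~to $W=\Rn$. The only mildly delicate step, and the one I expect to require the perturbation argument spelled out above, is the direction ``uniqueness $\Rightarrow$ $W=\Rn$'': given a nontrivial $v\in W^{\perp}$, one must check that $\wpo+\e v$ really is the initial value of a distinct PMP extremal, rather than being excluded by the nonlinear inequality part of the maximality condition. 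The strict signs provided by Assumptions \ref{ass:1} and \ref{ass:3} handle exactly this point.
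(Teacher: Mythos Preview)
Your argument is essentially the paper's: parametrize adjoints by $p_0 = \mu(0)$ via $\mu(t) = p_0\wSinv{t\,*}$, rewrite the PMP data as the affine constraints (i)--(iv) on $p_0$, and identify uniqueness with $W^\perp = \{0\}$. One omission: in the direction ``$W = \Rn \Rightarrow$ uniqueness'' you only treat \emph{normal} candidates (your condition (iv) fixes the Hamiltonian value to $1$), whereas the paper also disposes of abnormal adjoints by observing that any such $p_0$ would lie in $W^\perp$ itself and hence vanish. On the other hand, your handling of the converse direction is more careful than the paper's: you correctly note that one must take $\wpo + \e v$ with $\e$ small and invoke Assumptions~\ref{ass:1} and~\ref{ass:3} to recover the maximality inequality, while the paper simply asserts that the unscaled $\wpo + p$ ``satisfies PMP''.
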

\begin{proof}
For ease of notation set $C := \Span \left\{
T_{\wxo}N_0, \ T_{\wxo} \wh N_f, \wt f_0 (\wxo)
, \wt f_1 (\wxo), \wt f_2 (\wxo)\right\}$. 

1. Let Assumption \ref{ass:control} hold and assume, by contradiction, that
$C \neq \Rn$. Then there exists $p \in C^\perp$, $p \neq 0$:
\begin{equation*}
\scal{p}{\dx} = 0 \quad \forall\dx\in T_{\wxo}N_0 \cup T_{\wxo}\wh N_f, \quad 
\scal{p}{\wt f_i(\wxo)} = 0 \quad \forall i=0, 1, 2.
\end{equation*}
Let $\mu(t) := \left( \wpo + p \right) \wSinv{t*} = \wmu(t) + p  \wSinv{t*} $. \\
If $t \in [0, \wtau]$ then  $\scal{p\wSinv{t*}}{h_1(\wxi(t))} = \scal{p}{\left( 
\wt f_0 - \wt f_1 - \wt f_2 \right)(\wxo)} = 0$. 
If $t \in (\wtau, T]$, then  $\scal{p\wSinv{t*}}{ h_2(\wxi(t))} = \scal{p\wSinv{\wtau*}}{ h_2(\wxi(\wtau))} = \scal{p}{\left( 
\wt f_0 + \wt f_1 + \wt f_2 \right)(\wxo)}  = 0$. \\
As $\left. \mu(0) \right\vert_{T_{\wxo}N_0 \cup T_{\wxo}\wh N_f} = \left. \wpo \right\vert_{T_{\wxo}N_0 \cup T_{\wxo}\wh N_f}$, it is easily checked that  $\lambda(t) := \big( \mu(t), \wxi(t)\big)$ satisfies PMP, a contradiction.

2. Assume $C = \Rn$ and suppose, by contradiction, there exists an adjoint covector $\mu(t)$ which, together with the reference triple $\reftri$ satisfies PMP.
Thus the following conditions hold:
\begin{align}
\begin{split}
& \scal{\mu(t)}{f_0(\wxi(t))} + \wh u_1(t)\scal{\mu(t)}{f_1(\wxi(t))}  + \wh u_2(t)\scal{\mu(t)}{f_2\wxi(t))} = \\
& = F_0 ( \mu(t), \wxi(t) ) + \abs{ F_1 ( \mu(t), \wxi(t) )}  + \abs{ F_2 ( \mu(t), \wxi(t) )} = p_0 \in \{0, 1\}; 
\end{split} \\
& \exists p \in \left( T_{\wxo}N_0 \right)^\perp \cap \left( T_{\wxo}\wh N_f \right)^\perp \colon \mu(t) = p\wSinv{t*}. \label{eq:passurdo}
\end{align}
As in $t = \wtau$ the double switch of $\wh u$ occurs,  we have
\[\scal{\mu(\wtau)}{f_1(\wxi(\wtau))} = \scal{\mu(\wtau)}{f_2(\wxi(\wtau))} = 0,
\]
so that $\scal{\mu(\wtau)}{f_0(\wxi(\wtau))} = p_0$, that is:
\begin{equation}
\scal{p}{\wt f_1(\wxo)} = \scal{p}{\wt f_2(\wxo)} = 0, \quad 
 \scal{p}{\wt f_0(\wxo)} = p_0.  \label{eq:passurdo0}
\end{equation}
We now distinguish between two cases:

1. if $\big(\mu(t), \wxi(t)\big)$ is an abnormal extremal ($p_0 = 0$) then, by \eqref{eq:passurdo} and \eqref{eq:passurdo0}, $p \in C^\perp$. As $C=\Rn$ this means that $p =0$, so that $\mu(t) \equiv 0$, a contradiction in PMP. 

2. if $\big(\mu(t), \wxi(t)\big)$ is a normal extremal ($p_0 = 1$) then, by \eqref{eq:passurdo} and \eqref{eq:passurdo0}, $p$ acts on $C = \Rn$ in the same way as $\wh p$, so that $p = \wh p$ and $\mu(t) = \wmu(t)$, i.e.~$\wmu$ is the only adjoint covector associated to $\wxi$.
\end{proof}

\section{The main result}
We are now in a position to state the main results of this paper, which will be proved in the following sections.
\begin{theorem}\label{thm:main1}
Under Assumptions \ref{ass:PMP}-\ref{ass:control}
there exists $\wt R \in (0, R)$ such that for any $r \in \ballt$, problem \eqref{eq:problemar} has a bang-bang state-local minimiser $\left(\Tr, \ur , \xir \right)$. Each control component of $\ur$ has exactly one switching time.
Let $\taur{i}$ be the switching time of $\ur_i$, $i=1, 2$. At time $\taur{i}$ the control component $\ur_i$ switches from the value $-1$ to the value $1$. The final time $\Tr$ and the switching times $\taur{1}$, $\taur{2}$ depend smoothly on $r$.
\end{theorem}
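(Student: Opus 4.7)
My plan is to prove Theorem \ref{thm:main1} by a shooting / implicit function theorem argument that manufactures a $C^1$ family of bang-bang extremals $r \mapsto \lar$ for \eqref{eq:problemar}, followed by the Hamiltonian verification of state-local optimality from \cite{PSp15} applied fibrewise in $r$.

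First, for each $r \in \ball$ I would parametrize candidate bang-bang extremals with three arcs by $\theta = (c, a, \tau_1, \tau_2, T)$, where $c \in \cO(0) \subset \R^{n_0}$ are local coordinates on $\Nor$ around $\wxo$ (so $x_0 = x_0(c,r) \in \Nor$), $a \in \R^{n-n_0}$ gives the initial covector $p_0 := \sum_{i=1}^{n-n_0} a_i \uD\Phior{i}(x_0(c,r))$, and $\tau_1,\tau_2,T>0$ are the candidate switching times of the two control components and the final time. The candidate curve $\lambda(\cdot;\theta,r)$ would be built by concatenating the Hamiltonian flows of $\vHr{1}$ on $[0,\min\{\tau_1,\tau_2\}]$, of $\vKr{\nu}$ on the middle subinterval (with $\nu=1$ if $\tau_1\le\tau_2$ and $\nu=2$ otherwise), and of $\vHr{2}$ on the last subinterval up to $T$. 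I would then define a shooting map $\Psi\colon \ball\times\Theta\to\R^{n+3}$ whose components record the switching conditions $\Fr{i}(\lambda(\tau_i))=0$ for $i=1,2$, the final-point conditions $\Phifr{j}(\xi(T))=0$ for $j=1,\ldots,n-n_f$, the final-transversality conditions $\scal{\mu(T)}{v_k(\xi(T))}=0$ along a local frame $\{v_k\}$ of $T\Nfr$ with $k=1,\ldots,n_f$, and the normalization $\Hr{1}(\lambda(0))=1$. By Assumptions \ref{ass:PMP}--\ref{ass:1}, the nominal values $\theta^\star=(0,\wh a,\wtau,\wtau,\wT)$ satisfy $\Psi(0,\theta^\star)=0$; the count $n_0+(n-n_0)+3=n+3$ makes this a square system to which the implicit function theorem can be applied provided $\partial_\theta \Psi(0,\theta^\star)$ is non-singular.

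The step I expect to be hardest is showing that $\partial_\theta \Psi(0,\theta^\star)$ is an isomorphism, since at the nominal value the two switching times coincide and the map between the half-spaces $\delta\tau_1 \lessgtr \delta\tau_2$ must be handled simultaneously. My plan is to decompose the Jacobian into blocks following the template of \cite{FPS09,PSp11}. The leading $(\delta\tau_1,\delta\tau_2)$ entries of the switching conditions are controlled by the strictly positive Lie-bracket quantities $\dueforma{\vH{1}}{\vK{\nu}}(\ld)$ and $\dueforma{\vK{\nu}}{\vH{2}}(\ld)$ from Assumption \ref{ass:3}, which make this block full rank on each half-plane $\nu=1,2$. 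The coupling between $(\delta c,\delta a,\delta T)$ and $(\delta\tau_1,\delta\tau_2)$ is then governed by the second-order forms $J\se_\nu$ on the linear spaces $V_\nu$ from \eqref{eq:Vnu}: Assumption \ref{ass:coerc} combined with Theorem \ref{thm:coercalpha} provides the coercivity needed on \emph{both} $\nu=1$ and $\nu=2$, so the IFT behaves uniformly across the wall $\tau_1=\tau_2$. Non-degeneracy of the $(\delta c,\delta a)$ block acting on the endpoint-transversality equations is exactly the content of Lemma \ref{le:control}: without controllability, a nonzero covector annihilating $\Span\{T_{\wxo}\Nor, T_{\wxo}\wh N_f, \wt f_0(\wxo), \wt f_1(\wxo), \wt f_2(\wxo)\}$ would produce a kernel direction. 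Putting these three ingredients together should yield invertibility, and the IFT would then furnish a $C^1$ map $r\mapsto \theta(r)=(c(r),a(r),\taur{1},\taur{2},\Tr)$ on some ball $\ballt\subset\ball$ with $\Psi(r,\theta(r))\equiv 0$. By continuity, the regularity in Assumption \ref{ass:1} persists along the bang arcs and the positivity in Assumption \ref{ass:3} keeps the switching directions oriented from $-1$ to $+1$, so the extracted triple $(\Tr,\xir,\ur)$ has the bang-bang structure and switching directions claimed in the theorem.

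Finally, I would upgrade $(\Tr,\xir,\ur)$ to a state-local minimiser of \eqref{eq:problemar} by running the Hamiltonian verification of \cite{PSp15} fibrewise in $r$. Since coercivity of $J\se_\nu$ is an open condition in the $C^2$-topology of the data and endpoints, for $\wt R$ small enough the analogous coercivity persists for the perturbed second variation attached to $\lar$; Theorem \ref{thm:coercalpha} applied to $(\Pbr)$ then yields a perturbed penalty $\ar$ with $\left.\ar\right\vert_{\Nor}\equiv 0$, $\ud\ar(\xir(0)) = \mu^r(0)$, and coercivity on the full linear spaces analogous to $V_\nu$. I would then build the perturbed Lagrangian manifold $\Lambda^r \subset T^*\Rn$ as the graph of $\ud\ar$ over $\Nor$, flow it forward by the time-dependent Hamiltonian vector field associated to $\ur$ (respecting the three-arc structure $\vHr{1},\vKr{\nu},\vHr{2}$), and check that the composed projection $\Lambda^r \to \Rn$ is a diffeomorphism onto a tube around the range of $\xir$. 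Maximality of the perturbed Hamiltonian plus a standard value-function argument then dominate the arrival time at $\Nfr$ of any admissible competitor with trajectory in the tube, which gives state-local optimality of $\xir$. Uniqueness of $\lar$ among Pontryagin extremals whose graph is close to that of $\wla$ is then the uniqueness clause of the IFT, since any such extremal produces a zero of $\Psi$ near $(0,\theta^\star)$.
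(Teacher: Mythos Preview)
Your implicit-function-theorem construction of the family $r\mapsto\lar$ is essentially the paper's Lemma~\ref{le:implicit}: the shooting system \eqref{eq:Psi} is written there directly on $T^*\Rn$ rather than via local coordinates on $\Nor$, but the Jacobian analysis is the same---coercivity of $J\se_\nu$ (Corollary~\ref{cor:nucleo}) kills the $(\dx,\dt_1,\dt_2,\dt_3)$ components of any kernel element, and controllability (Lemma~\ref{le:control}) then forces the residual covector $\omega_0$ to vanish. For the optimality step the paper is more economical than your plan: instead of rebuilding the Lagrangian-manifold machinery of \cite{PSp15} fibrewise in $r$, it simply verifies the \emph{hypotheses} of the existing sufficient-condition theorems (Theorem~1 of \cite{Pog06} when $\taur{1}\neq\taur{2}$, Theorem~4.2 of \cite{PSp15} when $\taur{1}=\taur{2}$) and cites them. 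Those hypotheses are normal PMP and arc/switch regularity (which you correctly obtain by continuity), coercivity of the perturbed second variation (which the paper proves by an explicit compactness/contradiction argument rather than a bare appeal to openness), and injectivity of $\xir$.

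This last point is the one real gap in your outline. You say you would ``check that the composed projection $\Lambda^r\to\Rn$ is a diffeomorphism onto a tube around the range of $\xir$'', but that check presupposes that $\xir$ is injective, and injectivity of $\xir$ does \emph{not} follow from Assumption~\ref{ass:inje} by a soft perturbation argument: near the double switching time the velocity field of $\xir$ is discontinuous, so the usual ``$C^0$-close to an embedding is an embedding'' reasoning breaks down there. The paper supplies an explicit argument: assuming a sequence $r_k\to 0$ with self-intersecting $\xi^{r_k}$, one passes to the limit in the mean-value identity forced by the self-intersection and obtains that some convex combination of $h_1$, $k_\nu$, $h_2$ vanishes at $\wxd$; this contradicts $f_0(\wxd)\notin\Span\{f_1(\wxd),f_2(\wxd)\}$ from Remark~\ref{re:normale}. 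You need to insert this step (or an equivalent one) before your Hamiltonian verification can go through.
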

\begin{remark}
Notice that the switching times $\taur{1}$, $\taur{2}$ in Theorem \ref{thm:main1} may either coincide or be different, i.e.~ we may either have a double switching time or two simple switching times.
\end{remark}
\begin{theorem} \label{thm:main2}
Under Assumptions \ref{ass:PMP}-\ref{ass:control}
there exists $\wt R \in (0, R)$, $\e>0$ and a neighborhood $\cV$ of the graph of $\wla$ in $\R \times T^*\Rn$ such that for any $r \in \ballt$, the extremal pair $\lambda^ r$ associated to the local minimum time triple  $\left(\Tr, \ur , \xir \right)$ of Theorem \ref{thm:main1} is the only extremal pair whose final time is in $[\wh T - \e, \wh T + \e]$ and whose graph is in $\cV$.
\end{theorem}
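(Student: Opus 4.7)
The plan is to show that any extremal pair $\lambda$ of \eqref{eq:problemar} whose final time lies in $[\wT - \e, \wT + \e]$ and whose graph is contained in a sufficiently small tubular neighbourhood $\cV$ of the graph of $\wla$ necessarily coincides with the extremal $\lar$ produced by Theorem \ref{thm:main1}. The strategy has three layers: first, rigidly determine the bang--bang structure of $\lambda$ from the regularity assumptions; second, parametrise such extremals by a finite list of scalars (initial covector, two switching times, final time); third, invoke the implicit function theorem, whose non--degeneracy hypothesis rests on the coercivity of the second order variations together with the controllability assumption.

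For the first step I would invoke Assumption \ref{ass:1}: the switching functions $F_i\circ\wla$, $i=1,2$, are bounded away from zero outside any fixed neighbourhood of $\wtau$. By continuity in the data and the smallness of $\cV$ and $\ballt$, the functions $\Fr{i}\circ\lambda$ keep the reference sign on those intervals, so by Pontryagin maximality the control of $\lambda$ agrees with $\wh u$ there. Assumption \ref{ass:3}, in its Lie--bracket form (Proposition \ref{prop:ass3eq}), prescribes strictly non--zero and strictly signed one--sided derivatives of each switching function at $\wtau$, a property that persists for extremals close to $\wla$ by continuity. Hence on the interval near $\wtau$ each function $\Fr{i}\circ\lambda$ has exactly one transversal zero $\tau_i\in\cO(\wtau)$, and $\lambda$ has the structure: a $(-1,-1)$ bang on $[0,\min\{\tau_1,\tau_2\})$, an intermediate mixed bang on $(\min\{\tau_1,\tau_2\},\max\{\tau_1,\tau_2\})$ which degenerates to nothing when $\tau_1=\tau_2$, and a $(+1,+1)$ bang on $(\max\{\tau_1,\tau_2\},T]$.

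I next parametrise such candidate extremals by the shooting data: an initial point $x_0\in\Nor$, an initial covector $\mu_0$ satisfying the normal transversality on $\Nor$ and the PMP normalisation $H^{\max}(\mu_0,x_0)=1$, two switching times $\tau_1,\tau_2$, and a final time $T$. The piecewise Hamiltonian flow, together with the switching equations $\Fr{1}(\lambda(\tau_1))=\Fr{2}(\lambda(\tau_2))=0$, the final state constraint $\xi(T)\in\Nfr$ and the final transversality $\mu(T)|_{T_{\xi(T)}\Nfr}=0$, assembles into a smooth system $\Psi(x_0,\mu_0,\tau_1,\tau_2,T;r)=0$ whose solutions are precisely the admissible extremal pairs of \eqref{eq:problemar} close to $\wla$. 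The two orderings $\tau_1\leq\tau_2$ and $\tau_1>\tau_2$ correspond to the two half--spaces $V^+_{0,\nu}$, $\nu=1,2$, of Section \ref{sec:secvar}, and each must be analysed separately. The decisive step is that, at the reference datum and for $r=0$, the Jacobian of $\Psi$ restricted to each chart is an isomorphism. A direct Hamiltonian computation in the spirit of \cite{PSp11,FPS09,PS13b} identifies the quadratic model of this linearisation with the second order variation $J\se_\nu$ constrained to $V_\nu$. Coercivity on $V_{0,\nu}$, extended by Theorem \ref{thm:coercalpha} to coercivity on $V_\nu$ through the penalty $\alpha$, yields injectivity; surjectivity rests on the controllability content of Lemma \ref{le:control}, since
\[
\Span\bigl\{T_{\wxo}N_0,\ T_{\wxo}\wh N_f,\wt f_0(\wxo),\wt f_1(\wxo),\wt f_2(\wxo)\bigr\}=\Rn
\]
is precisely what guarantees that variations in the initial state, the two switching times and the final time reach every end--point direction. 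The implicit function theorem applied with parameter $r$ then yields, for each $r\in\ballt$ with $\wt R$ small enough, a unique solution in each chart.

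The principal obstacle is the boundary behaviour at the double switching time: the shooting map $\Psi$ is not smooth across $\tau_1=\tau_2$, so the implicit function theorem cannot be executed in a single global chart and must be run separately on the two orderings. The $\nu$--dependence of Assumption \ref{ass:coerc} is exactly what makes this possible. Finally, the two local solutions must be matched on the common boundary $\tau_1=\tau_2$ and compared with the extremal $\lar$ constructed in Theorem \ref{thm:main1} to conclude uniqueness; this patching mirrors the analogous step carried out for the Mayer problem in \cite{PSp11,FPS09}.
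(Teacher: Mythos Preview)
Your strategy coincides with the paper's: (i) force any nearby extremal of \eqref{eq:problemar} to have the reference bang--bang structure, then (ii) feed it into the shooting system \eqref{eq:Psi} and invoke the implicit function theorem, whose non--degeneracy was already established in Lemma \ref{le:implicit} from coercivity plus controllability. The paper does not redo the Jacobian analysis, nor does it worry about patching, for the uniqueness step: once (i) determines the ordering of the two switches, the extremal lands in one of the two $\nu$--charts and Lemma \ref{le:implicit} finishes the argument.

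The one place where your sketch is loose is step (i). Invoking ``one--sided derivatives of the switching functions'' presupposes you already know which control is being used along $\lambda$ near $\wtau$, which is precisely what you are trying to establish. The paper avoids this circularity by exploiting the absolute--value form of Assumption \ref{ass:3}: for \emph{any} admissible control $u$ and for $i\neq j$,
\[
\ddt \Fr{i}\circ\lambda(t)=\dueforma{\vFr{0}}{\vFr{i}}(\lambda(t))+u_j(t)\,\dueforma{\vFr{j}}{\vFr{i}}(\lambda(t))\geq \dueforma{\vFr{0}}{\vFr{i}}(\lambda(t))-\abs{\dueforma{\vFr{1}}{\vFr{2}}(\lambda(t))}>0
\]
on a fixed interval about $\wtau$, uniformly in $r$, in $\ell\in\cO(\lo)$, and in the unknown control. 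Hence each $\Fr{i}\circ\lambda$ is strictly increasing there and has exactly one zero. Your citation of Proposition \ref{prop:ass3eq} is the right tool, but the mechanism is monotonicity \emph{independent of the control}, not persistence of one--sided derivatives along the reference.
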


\subsection{The coercivity of the second order variations}
\label{sec:coerc}
In \cite{PSp15}, in order to prove the strong local optimality result, the authors consider the bilinear form $Q_\nu$ associated to $J\se_\nu$, $\nu = 1,2$, i.e.~if $\de = (\dx, a_{1}, b, a_{2})$, $\df = (\dy, c_{1}, d, c_{2}) \in V_{0,\nu}$ then
\begin{equation*}
  \begin{split}
    Q_\nu[\de, \df] = & \uD^2(\a + \wbe )(\wxo)(\dx, \dy) %
   + \liedede{\dy}{(a_{1}g_{1} + b \, j_{\nu} + a_{2}g_{2})}{\wbe}{\wxo} %
   \\ %
   & + \liedede{\dx}{(c_{1}g_{1} + d \, j_{\nu} + c_{2}g_{2}) }{\wbe}{\wxo}  \\ %
   & + \liedede{(c_{1}g_{1} + d \, j_{\nu} + c_{2}g_{2})}{(a_{1}g_{1} + b \,
   j_{\nu} + a_{2}g_{2})}{\wbe}{\wxo} \\ %
   & + d a_1 \liede{\liebr{g_{1}}{j_{\nu}}}{\wbe}{\wxo} %
   + c_2 a_1 \liede{\liebr{g_{1}}{g_{2}} }{\wbe}{\wxo} %
   + c_2 b \liede{\liebr{j_{\nu}}{g_{2}}}{\wbe}{\wxo}. %
  \end{split}
\end{equation*}
The bilinear forms $Q_\nu$ can be written in a more compact way by introducing the linear Hamiltonians 
\begin{equation*}
\begin{split}
& G_i\se \colon (\dep, \dx) \in \Rns \times \Rn \mapsto
\scal{\dep}{g_i(\wxo)} + \liedede{\dx}{g_i}{\wbe}{\wxo} \in \R , \\ %
& J_\nu\se \colon (\dep, \dx) \in \Rns \times \Rn \mapsto
\scal{\dep}{j_\nu(\wxo)} + \liedede{\dx}{j_\nu}{\wbe}{\wxo} \in \R,  %
\end{split}
\end{equation*}
and the associated constant Hamiltonian vector fields
$\vG{1}\se$ and $\vJ{\nu}\se$.
An easy computation shows that
\begin{alignat*}{2}
&\dueforma{(\dep, \dx)}{\vG{1}\se} = G_i\se (\dep, \dx), \qquad
&& \dueforma{(\dep, \dx)}{\vJ{\nu}\se} =J_\nu\se (\dep, \dx), \\
& G_i\se(\vG{j}\se) = \liede{\liebr{g_j}{g_i}}{\wbe}{\wxo}, 
&& G_i\se(\vJ{\nu}\se) = \liede{\liebr{j_\nu}{g_i}}{\wbe}{\wxo}
= - J_\nu\se(\vG{i}\se).
\end{alignat*}
With these equalities at hand it is just a straightforward computation to prove the following proposition.
\begin{proposition}
For any admissible variation $\de = (\dx, a_1, b, a_2 ) \in V_\nu$ and any $\dep \in \Rns$ let
\begin{equation*}
(\dep_T, \dx_T) := (\dep, \dx) + a_1\vG{1}\se + b \vJ{\nu}\se + a_2\vG{2}\se.
\end{equation*}
Then 
\begin{equation*}
  \begin{split}
    Q_\nu[\de, \df] =  \uD^2(\a + \wbe )(\wxo)(\dx, \dy) 
    +\scal{\dep}{\dy} - \scal{\dep_T}{\dy + c_1 g_1 + d j_\nu + c_2 g_2} \\%
 + c_1 G_1\se\left(\dep, \dx \right) + d J\se_\nu\left((\dep, \dx)+ a_1\vG{1}\se\right) %
+ c_2 G_2\se\left((\dep, \dx)+ a_1\vG{1}\se + d\vJ{\nu}\se\right) .
  \end{split}
\end{equation*}
\end{proposition}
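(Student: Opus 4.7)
The identity is an algebraic rewriting of the defining expression of $Q_\nu$ in terms of the linear Hamiltonians $G_i\se$, $J_\nu\se$ and their (constant) Hamiltonian vector fields. Since $G_i\se(\dep,\dx) = \scal{\dep}{g_i(\wxo)} + \liedede{\dx}{g_i}{\wbe}{\wxo}$ is linear in $(\dep,\dx)$, the vector field $\vG{i}\se$ is the constant $(-q_i, g_i(\wxo))$, where $q_i \in \Rns$ is determined by $\scal{q_i}{v} := \liedede{v}{g_i}{\wbe}{\wxo}$; analogously $\vJ{\nu}\se = (-r_\nu, j_\nu(\wxo))$. Hence $\dx_T = \dx + a_1 g_1(\wxo) + b\, j_\nu(\wxo) + a_2 g_2(\wxo)$ and $\dep_T = \dep - a_1 q_1 - b\, r_\nu - a_2 q_2$. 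A first observation is that the right-hand side is independent of $\dep$: the $\dep$-parts of $c_1 G_1\se + d\, J_\nu\se + c_2 G_2\se$ contribute $\scal{\dep}{c_1 g_1 + d j_\nu + c_2 g_2}$, cancelling the residual $-\scal{\dep}{c_1 g_1 + d j_\nu + c_2 g_2}$ coming from $\scal{\dep}{\dy} - \scal{\dep_T}{\dy + c_1 g_1 + d j_\nu + c_2 g_2}$. Hence it is enough to check the identity for $\dep = 0$.

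With $\dep = 0$, the pairing $-\scal{\dep_T}{\dy + c_1 g_1(\wxo) + d\, j_\nu(\wxo) + c_2 g_2(\wxo)}$ expands, via the defining relations for $q_i, r_\nu$ and the bilinearity of $\liedede{\cdot}{\cdot}{\wbe}{\wxo}$, into
\begin{equation*}
\liedede{\dy}{a_1 g_1 + b j_\nu + a_2 g_2}{\wbe}{\wxo} + \liedede{c_1 g_1 + d j_\nu + c_2 g_2}{a_1 g_1 + b j_\nu + a_2 g_2}{\wbe}{\wxo} ,
\end{equation*}
which recovers two of the non-bracket pieces of $Q_\nu[\de,\df]$. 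The $(\dep,\dx)$-part of $c_1 G_1\se + d\, J_\nu\se + c_2 G_2\se$ contributes the third non-bracket piece, $\liedede{\dx}{c_1 g_1 + d j_\nu + c_2 g_2}{\wbe}{\wxo}$, while $\uD^2(\a + \wbe)(\wxo)(\dx,\dy)$ is common to both sides.

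The three Lie-bracket terms in $Q_\nu$ arise from the cross effects of the cumulative shifts $a_1 \vG{1}\se$ and $b\, \vJ{\nu}\se$ inside the arguments of $J_\nu\se$ and $G_2\se$. Using the identities listed before the proposition, $G_i\se(\vG{j}\se) = \liede{\liebr{g_j}{g_i}}{\wbe}{\wxo}$ and $J_\nu\se(\vG{i}\se) = -G_i\se(\vJ{\nu}\se) = \liede{\liebr{g_i}{j_\nu}}{\wbe}{\wxo}$, the cross products produce precisely
\begin{equation*}
d\, a_1 \liede{\liebr{g_1}{j_\nu}}{\wbe}{\wxo} + c_2 a_1 \liede{\liebr{g_1}{g_2}}{\wbe}{\wxo} + c_2 b \liede{\liebr{j_\nu}{g_2}}{\wbe}{\wxo} ,
\end{equation*}
which are the last three terms of the defining expression of $Q_\nu$, closing the identity. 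The only real obstacle is sign bookkeeping in the antisymmetric relation $G_i\se(\vJ{\nu}\se) = -J_\nu\se(\vG{i}\se)$, together with assigning each mixed monomial $a_i c_j$, $a_i d$, $b c_j$ to exactly one expansion step; working cumulatively (first in $a_1$, then $b$, then $a_2$) and fixing $\dep = 0$ reduces the verification to a linear computation.
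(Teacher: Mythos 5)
Your proof is correct and is exactly the ``straightforward computation'' the paper leaves to the reader: you expand the right-hand side using the displayed identities $\dueforma{(\dep,\dx)}{\vG{i}\se}=G_i\se(\dep,\dx)$, $G_i\se(\vG{j}\se)=\liede{\liebr{g_j}{g_i}}{\wbe}{\wxo}$ and $G_i\se(\vJ{\nu}\se)=-J_\nu\se(\vG{i}\se)$, after the key (and correct) observation that the $\dep$-dependence cancels. The only caveat is that your final cross term $c_2\, b\,\liede{\liebr{j_\nu}{g_2}}{\wbe}{\wxo}$ tacitly reads the last shift as the cumulative $b\,\vJ{\nu}\se$ (consistently with the definition of $Q_\nu$ and with the kernel characterisation in Proposition \ref{prop:nucleo}) rather than the $d\,\vJ{\nu}\se$ literally printed in the statement, which would produce $c_2\, d$ instead; that is evidently a typo in the statement, and your reading is the intended one.
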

\begin{proposition}\label{prop:nucleo}
An admissible variation $\de \in V_{0,\nu}$ is in $V_{0,\nu}  \cap V_{0,\nu}^{\perp_{J\se_\nu}} $ if and only if there exists $\dep \in \Rns$ such that
\begin{align*}
& \dep = - \uD^2(\a + \wbe )(\wxo)(\dx, \cdot) + \omega_0, \qquad \omega_0 \in \left( T_{\wxo}N_0 \right)^\perp, \\ %
& G_1\se(\dep, \dx) = \dueforma{(\dep, \dx)}{\vG{1}\se} = 0 , \\ %
& J_\nu\se\left( (\dep, \dx) + a_1 \vG{1}\se\right) =\dueforma{(\dep, \dx) + a_1 \vG{1}\se}{\vJ{\nu}\se} = 0 , \\ %
& G_2\se\left( (\dep, \dx)  + a_1 \vG{1}\se + b\vJ{\nu}\se
\right)=\dueforma{(\dep, \dx) + a_1 \vG{1}\se + b\vJ{\nu}\se}{\vG{2}\se} = 0 , \\ %
& \dep_T \in \left( T_{\wxo}\wh N_f \right)^\perp  .
\end{align*}
\end{proposition}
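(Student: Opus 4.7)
The plan is to read this as a Lagrange multiplier characterization of the radical of $J\se_\nu$ restricted to $V_{0,\nu}$, using as the main engine the compact formula for $Q_\nu[\de,\df]$ established in the preceding proposition. Viewed inside the ambient space $\Rn\times\R^3$, the subspace $V_{0,\nu}$ is cut out by the two linear constraints $\dy\in T_{\wxo}N_0$ and $\dy + c_1 g_1(\wxo) + d j_\nu(\wxo) + c_2 g_2(\wxo)\in T_{\wxo}\wh N_f$; hence by standard annihilator duality, $\df\mapsto Q_\nu[\de,\df]$ vanishes on $V_{0,\nu}$ if and only if there exist $\omega_0\in(T_{\wxo}N_0)^\perp$ and $\omega_f\in(T_{\wxo}\wh N_f)^\perp$ with
\begin{equation*}
Q_\nu[\de,\df] \;=\; \scal{\omega_0}{\dy} + \scal{\omega_f}{\dy + c_1 g_1(\wxo) + d j_\nu(\wxo) + c_2 g_2(\wxo)}
\end{equation*}
for every $\df=(\dy,c_1,d,c_2)\in\Rn\times\R^3$.

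For the implication ($\Leftarrow$), I would take $\dep$ as provided by the five conditions and any $\df\in V_{0,\nu}$, substitute into the compact form, and observe that the summands multiplied by $c_1$, $d$, $c_2$ vanish by the three Hamiltonian conditions on $G_1\se$, $J_\nu\se$, $G_2\se$; that the term $-\scal{\dep_T}{\dy + c_1 g_1 + d j_\nu + c_2 g_2}$ vanishes because its second argument lies in $T_{\wxo}\wh N_f$ by admissibility of $\df$ while $\dep_T\in(T_{\wxo}\wh N_f)^\perp$ by the last condition; and that what is left, $\uD^2(\a+\wbe)(\wxo)(\dx,\dy)+\scal{\dep}{\dy}$, equals $\scal{\omega_0}{\dy}$ by the first condition and so vanishes since $\dy\in T_{\wxo}N_0$ and $\omega_0\in(T_{\wxo}N_0)^\perp$.

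For the converse ($\Rightarrow$), I would start from the multipliers $\omega_0,\omega_f$ produced by the duality statement above, and then choose $\dep\in\Rns$ so that $\dep_T=\omega_f$, which is possible in exactly one way (subtract from $\omega_f$ the covector parts of the constant Hamiltonian vector fields $a_1\vG{1}\se$, $b\vJ{\nu}\se$, $a_2\vG{2}\se$ entering the definition of $\dep_T$). With this choice the condition $\dep_T\in(T_{\wxo}\wh N_f)^\perp$ holds by construction. Substituting the resulting $\dep$ back into the compact expression for $Q_\nu[\de,\df]$ and matching the coefficients of $\dy$, $c_1$, $d$, $c_2$ against those in the duality identity then yields, in this order, the remaining four conditions of the proposition.

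The main bookkeeping obstacle is this last coefficient matching. One has to apply the symplectic identities $\dueforma{(\dep,\dx)}{\vG{i}\se}=G_i\se(\dep,\dx)$ and the antisymmetry $G_i\se(\vJ{\nu}\se)=-J_\nu\se(\vG{i}\se)$ recorded just before the previous proposition in order to rewrite the $c_1$, $d$, $c_2$ coefficients of the compact form — after $\dep_T$ has been replaced by $\omega_f$ — as $\scal{\omega_f}{g_1(\wxo)}$, $\scal{\omega_f}{j_\nu(\wxo)}$, $\scal{\omega_f}{g_2(\wxo)}$ modulo precisely the quantities $G_1\se(\dep,\dx)$, $J_\nu\se((\dep,\dx)+a_1\vG{1}\se)$, $G_2\se((\dep,\dx)+a_1\vG{1}\se+b\vJ{\nu}\se)$ whose vanishing is the content of the three bracket-type conditions. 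Once the sign and component conventions are sorted out, the computation is routine, since the compact form of $Q_\nu$ was tailored precisely to expose the covector and bracket quantities that appear in the statement.
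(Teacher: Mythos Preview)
The paper does not supply a proof of this proposition; it states the result immediately after the compact formula for $Q_\nu[\de,\df]$, clearly intending it as a direct consequence. Your proposal is precisely the natural argument from that formula --- Lagrange multipliers on the two linear constraints defining $V_{0,\nu}$, then coefficient matching in $\dy,c_1,d,c_2$ after choosing $\dep$ so that $\dep_T$ realises the final-constraint multiplier --- and it is correct.
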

\begin{corollary}\label{cor:nucleo}
Assume the coercivity assumption, Assumption \ref{ass:coerc}, holds and let $\de = \left( \dx, a_1, b, a_2 \right) \in V_{0,\nu}$. If there exists $\dep \in \Rns$ such that 
\begin{align*}
& \dep = - \uD^2(\a + \wbe )(\wxo)(\dx, \cdot) + \omega_0, \qquad \omega_0 \in \left( T_{\wxo}N_0 \right)^\perp, \\ %
& G_1\se(\dep, \dx) = \dueforma{(\dep, \dx)}{\vG{1}\se} = 0 , \\ %
& J_\nu\se\left( (\dep, \dx) + a_1 \vG{1}\se\right) =\dueforma{(\dep, \dx) + a_1 \vG{1}\se}{\vJ{\nu}\se} = 0 , \\ %
& G_2\se\left( (\dep, \dx)  + a_1 \vG{1}\se + b\vJ{\nu}\se
\right)=\dueforma{(\dep, \dx) + a_1 \vG{1}\se + b\vJ{\nu}\se}{\vG{2}\se} = 0 , \\ %
& \dep_T \in \left( T_{\wxo}\wh N_f \right)^\perp  .
\end{align*}
then $\de= (0,0,0,0)$.
\end{corollary}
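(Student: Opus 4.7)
The plan is to combine Proposition \ref{prop:nucleo} with the coercivity hypothesis essentially in a single line. By that proposition, the five displayed conditions in the statement of the corollary express exactly the fact that $\de \in V_{0,\nu}$ lies in $V_{0,\nu}^{\perp_{J\se_\nu}}$, i.e.~that $\de$ is $Q_\nu$-orthogonal to every element of $V_{0,\nu}$, where $Q_\nu$ is the symmetric bilinear form polar to the quadratic form $J\se_\nu$.

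First I would specialise to $\df = \de$. Since $Q_\nu[\de,\de] = J\se_\nu[\de]$, this immediately gives $J\se_\nu[\de] = 0$. Next, Assumption \ref{ass:coerc} says precisely that $J\se_\nu$ is coercive on $V_{0,\nu}$, hence there exists a constant $c>0$ such that $J\se_\nu[\de] \geq c\,\|\de\|^2$ for every $\de \in V_{0,\nu}$. Combined with $J\se_\nu[\de]=0$ this forces $\|\de\|=0$, so $\de = (0,0,0,0)$ as claimed.

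In short, the corollary is merely the elementary observation that the radical of a positive definite (a fortiori coercive) quadratic form is trivial, once one has identified the system of conditions in the hypothesis with the defining relations of that radical. There is no real obstacle to overcome at this point: the substantive work has already been done in Proposition \ref{prop:nucleo}, which rewrites the $Q_\nu$-orthogonality conditions in a form amenable to the Hamiltonian constructions used later in the paper. The only care one should take is to notice that the condition $\dep_T \in (T_{\wxo}\wh N_f)^\perp$ is what accounts for pairing $\de$ against admissible variations $\df$ whose $\dy$-component ranges over $T_{\wxo}N_0$ while respecting the endpoint linear constraint defining $V_{0,\nu}$; once that identification is in place, the conclusion is immediate.
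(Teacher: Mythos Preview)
Your proposal is correct and is exactly the intended argument: the paper states the corollary without proof precisely because it is the immediate consequence of Proposition~\ref{prop:nucleo} (identifying the hypotheses with membership in $V_{0,\nu}\cap V_{0,\nu}^{\perp_{J''_\nu}}$) together with the fact that a coercive quadratic form has trivial radical.
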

Consider the Lagrangian manifold of the initial transversality conditions 
\begin{equation*}
\Lambda_0 := \left\{
\ell =\ud\alpha(x) + \omega \colon x \in N_0, \ \omega \in \left( T_x N_0 \right)^\perp, \ H_1(\ell) = 1
\right\}
\end{equation*}
so that 
\begin{equation*}
T_{\lo}\Lambda_0  := \left\{ \dl = \ud\alpha_* \dx + \omega \colon \dx \in T_{\wxo}N_0, \ \omega \in \left( T_{\wxo}N_0 \right)^\perp, \  \dueforma{\dl}{\vH{1}(\lo)} = 0
\right\}.
\end{equation*}
Let $i \colon (\dep, \dx) \in \Rns \times \Rn \mapsto \dl := -\dep + \ud\,(-\wbe)\dx \in T^*{\Rn}$.
The map $i$ is an antisymplectic isomorphism,
\begin{equation*}
\begin{split}
& i \vG{1}\se= \vH{1}(\lo) = \vG{1}(\lo) = \cFref_{\wtau \, *}^{-1}\vH{1}\circ\cFref_{\wtau}(\lo), \\ 
& i \vG{2}\se= \vG{2}(\lo) = \cFref_{\wtau \, *}^{-1}\vH{2}\circ\cFref_{\wtau}(\lo) =  \cFref_{\wT \, *}^{-1}\vH{2}\circ\cFref_{\wT}(\lo) , \\ 
& i \vJ{\nu}\se= \vJ{\nu}(\lo) = \cFref_{\wtau \, *}^{-1}\vK{\nu}\circ\cFref_{\wtau}(\lo) \qquad \nu=1, 2,
\end{split}
\end{equation*}
and $T_\lo\Lambda_0 = i L_0\se $
where
\begin{multline*}
\qquad 
L_0\se := \left\{\left(\dep, \dx \right) \colon \dx \in T_{\wxo}N_0, \ \dep = -\uD^2(\a +\wbe)(\wxo)(\dx, \cdot) + \omega,  \right. \\  
\left. \omega \in \left( T_{\wxo}N_0 \right)^\perp , \ \dueforma{(\dep, \dx)}{\vG{1}\se} = 0
\right\}. 
\qquad 
\end{multline*}

\begin{lemma}\label{le:implicit}
Under Assumptions \ref{ass:PMP} to \ref{ass:control} there exist $\wt R \in (0, R)$, $\e > 0 $ and a neighborhood $\cO(\lo)$ of $\lo$ in $T^*\Rn$ such that for any $r \in \ballt$, there exists a unique bang-bang extremal pair $\lar = (\mur, \xir)$ of \eqref{eq:problemar} having the following properties:
\begin{enumerate}[parsep=0pt]
\item $\lar$ is a normal extremal and $\lar(0) \in \cO(\lo)$;
\item each component $\ur_i$, $i=1,2$ of the associated control $\ur= (\ur_1, u^r_2)$ has exactly one switching time $\taur{i}$;  $\taur{1}, \taur{2} \in [\wtau -\e, \wtau + \e]$; at time $\taur{i}$ the control component $\ur_i$ switches from the value $-1$ to the value $+1$;
\item $\Tr \in [\wh T - \e, \wh T + \e]$;
\item $\taur{1}$, $\taur{2}$, $\Tr$ and $\lar(0)$ depend smoothly on $r$,
\item the bang arcs are regular: for $i =1, 2 \quad 
u^r_i (t) F^r_i(\lar(t)) > 0 \quad \forall t \neq \taur{i},
$
\item each switching time is regular: $\left. \ddt u_i^r(t) F^r_i(\lar(t)) \right\vert_{t = \taur{i}\pm} > 0,  \quad i=1,2$. 
\end{enumerate}
\end{lemma}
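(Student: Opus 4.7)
The plan is to encode the bang-bang extremals of \eqref{eq:problemar} with the required structure as zeros of a smooth finite-dimensional map $\Psi^r$ and to apply the implicit function theorem at $r = 0$. Take as unknowns $z = (x_0, a, \tau_1, \tau_2, T)$ with $x_0 \in \Nor$ (locally parametrised by $n_0$ coordinates), $a \in \R^{n-n_0}$ and $\tau_1, \tau_2, T \in \R$, for $n + 3$ scalar unknowns in total; set $p_0 := \sum_i a_i \uD\Phior{i}(x_0)$ so that initial transversality on $\Nor$ is automatic; and define $\lar(\cdot; z)$ as the solution of the Hamiltonian ODE with initial datum $\ell_0 := (p_0, x_0)$ and controls $u_i^r(t) := \sgn{t - \tau_i}$, $i = 1, 2$. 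Let $\Psi^r(z) \in \R^{n+3}$ collect the normality normalisation $H_1^r(\ell_0) - 1$, the two switching conditions $F_i^r(\lar(\tau_i; z))$, the $n - n_f$ endpoint constraints $\Phifr{j}(\pi\lar(T; z))$ and the $n_f$ scalar equations expressing $\mur(T; z) \in (T_{\xir(T)}\Nfr)^\perp$. By Assumption \ref{ass:PMP} one has $\Psi^0(\wh z) = 0$ at $\wh z := (\wxo, \wh a, \wtau, \wtau, \wT)$.

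The core of the proof is to show $\uD_z\Psi^0(\wh z)$ is invertible. Take a variation $\delta z = (\dx_0, \delta a, \dt_1, \dt_2, \dt_3)$ in its kernel; by the symmetry $\tau_1 \leftrightarrow \tau_2$ it suffices to work in the half-space $\dt_1 \leq \dt_2$ with $\nu = 1$. Set $\dx := \dx_0 \in T_{\wxo}N_0$, $a_1 := \dt_1$, $b := \dt_2 - \dt_1$, $a_2 := \dt_3 - \dt_2$, and choose $\a$ as furnished by Theorem \ref{thm:coercalpha}. A Hadamard-type computation using $\Phior{i}(\wxo) = 0$, $\da(\wxo) = \wpo$ and $\ud\wbe(\wxo) = -\wpo$ shows that the linearised initial covector decomposes as $\dep = -\uD^2(\a + \wbe)(\wxo)(\dx, \cdot) + \omega_0$ with a unique $\omega_0 \in (T_{\wxo}N_0)^\perp$ (the component $\sum_i \delta a_i \uD\Phio{i}(\wxo)$ contributing to $\omega_0$). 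Pulling the propagated variation back to time $0$ along $\cFref_\wtau^{-1}$ and reading it through the antisymplectic isomorphism $i$ of Section \ref{sec:coerc}, the linearisations of $H_1^r(\ell_0) = 1$, $F_1^r(\lar(\tau_1)) = 0$, $F_2^r(\lar(\tau_2)) = 0$, $\Phifr{j}(\pi\lar(T)) = 0$ and $\mur(T) \in (T_{\xir(T)}\Nfr)^\perp$ become exactly the equations of Corollary \ref{cor:nucleo} (the final one giving $\dep_T \in (T_{\wxo}\wh N_f)^\perp$). Corollary \ref{cor:nucleo} then yields $\de := (\dx, a_1, b, a_2) = 0$, so $\dx_0 = 0$ and $\dt_1 = \dt_2 = \dt_3 = 0$. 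With $\de = 0$ the residual conditions reduce to $\dep = \omega_0 \in (T_{\wxo}N_0)^\perp \cap (T_{\wxo}\wh N_f)^\perp$ with $\scal{\omega_0}{g_1(\wxo)} = \scal{\omega_0}{g_2(\wxo)} = \scal{\omega_0}{j_1(\wxo)} = 0$; since $\Span\{g_1(\wxo), g_2(\wxo), j_1(\wxo)\} = \Span\{\wt f_0(\wxo), \wt f_1(\wxo), \wt f_2(\wxo)\}$, Lemma \ref{le:control} (Assumption \ref{ass:control}) forces $\omega_0 = 0$, hence $\dep = 0$ and $\delta a = 0$.

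Invertibility of $\uD_z \Psi^0(\wh z)$ together with smoothness of $\Psi^r$ in $(r, z)$ -- including smoothness in $(\tau_1, \tau_2)$ across the nominal coincidence $\tau_1 = \tau_2 = \wtau$, since the two control components switch independently and the resulting two-parameter flow is smooth in the pair of switching times -- now allow the implicit function theorem to furnish, for every $r$ in a sufficiently small ball $\ballt$, a unique smooth $z(r) = (x_0^r, a^r, \tau_1^r, \tau_2^r, T^r)$ solving $\Psi^r(z(r)) = 0$ with $z(0) = \wh z$. The associated extremal pair $\lar$ yields items (1)--(4) of the statement, while items (5)--(6) (strict regularity of the bang arcs and of each switching time) transfer from Assumptions \ref{ass:1} and \ref{ass:3}, which hold with strict inequalities on compact sets at $r = 0$, to all small $r$ by continuity, after shrinking $\wt R$ and $\e$ if necessary. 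The main technical obstacle lies in the middle paragraph: the precise algebraic translation of the linearised extremal equations, via pull-back through the reference flow and through the antisymplectic $i$, into the exact form required by Corollary \ref{cor:nucleo}, including the identification of the linearised final transversality as $\dep_T \in (T_{\wxo}\wh N_f)^\perp$. The compatibility of the two possible orderings $\tau_1^r \lessgtr \tau_2^r$ is a secondary point handled by applying the argument above in each half-space and observing that, by local uniqueness in the implicit function theorem, the two branches glue into a single smooth family.
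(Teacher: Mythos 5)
Your overall strategy coincides with the paper's: encode the candidate extremals as zeros of a finite-dimensional map, prove invertibility of its linearisation at $r=0$ by translating the kernel, through the pull-back along the reference flow and the antisymplectic isomorphism $i$, into the equations of Corollary \ref{cor:nucleo} (coercivity kills $(\dx,\dt_1,\dt_2-\dt_1,\dt_3-\dt_2)$, controllability via Lemma \ref{le:control} kills the residual $\omega_0$), and then invoke the implicit function theorem. That middle part of your argument is sound and is exactly what the paper does.

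The gap is in the regularity of your map $\Psi^r$. With $u^r_i(t)=\sgn{t-\tau_i}$ the endpoint flow, as a function of $(\tau_1,\tau_2)$, is given by $\exp((T-\tau_2)\hr{2})\circ\exp((\tau_2-\tau_1)\kr{1})\circ\exp(\tau_1 \hr{1})$ for $\tau_1\le\tau_2$ and by the analogous formula with $\kr{2}$ for $\tau_2\le\tau_1$. The two formulas agree on the diagonal together with their first derivatives (the one-sided partials in $\tau_1$ and $\tau_2$ are the transports of $-2\fr{1}$ and $-2\fr{2}$ in both regimes), so the glued map is $C^1$; but the mixed second partials $\partial^2_{\tau_1\tau_2}$ jump across the diagonal by a term proportional to $\liebr{f_1}{f_2}$, which does not vanish in general (Proposition \ref{prop:ass3eq} only bounds $\abs{\scal{\ld}{\liebr{f_1}{f_2}(\wxd)}}$, it does not make it zero). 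So the assertion that ``the resulting two-parameter flow is smooth in the pair of switching times'' is false, and this is precisely the non-generic feature of the double switch. The paper avoids it by setting up \emph{two} systems \eqref{eq:Psi}, one for each ordering $\nu=1,2$; each is built from the composition $\exp(t_3-t_2)\vHr{2}\circ\exp(t_2-t_1)\vKr{\nu}\circ\exp t_1\vHr{1}$, which is smooth in $(t_1,t_2,t_3)$ with no ordering restriction, applies the implicit function theorem to each, and only afterwards selects the branch consistent with the sign of $\taur{2}-\taur{1}$ to build the extremal. Your closing remark that the two branches ``glue into a single smooth family by local uniqueness'' does not repair this: uniqueness for a map that is only $C^1$ yields only a $C^1$ family, and uniqueness within each smooth branch does not by itself imply that the selected family is smooth in $r$ as asserted in claim 4. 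You need the two-system formulation (or an equivalent device) to make the smoothness claims legitimate.
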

\begin{proof}
We prove claims 1-4 applying the implicit function theorem: for $\nu=1, 2$ consider the following system of $2n + 3$ scalar equations in the unknowns $r\in \ball$, $\ell = (p,x) \in T^*\Rn$, $t_1$, $t_2$, $t_3 \in \R$:
\begin{subequations}\label{eq:Psi}
\begin{align}
& \ell \in \left( T_{\pi\ell}\Nor \right)^\perp \times \Nor, \label{eq:prima} \\
& \Hr{1}(\ell) - 1 = 0 , \\
& \Kr{\nu}\circ\exp t_1\vHr{1}(\ell) - 1 = 0 , \\
& \Hr{2}\circ\exp (t_2 - t_1 )\vKr{\nu}\circ\exp t_1\vHr{1}(\ell) -1  = 0 , \\
\begin{split}
& \exp(t_3 - t_2)\vHr{2} \circ\exp (t_2 - t_1 )\vKr{\nu}\circ\exp t_1\vHr{1}(\ell) \\ %
& \phantom{\exp(t_3 - t_2)\vHr{2} \circ}
\in \left( T_{\pi\exp(t_3 - t_2)\vHr{2} \circ\exp (t_2 - t_1 )\vKr{\nu}\circ\exp t_1\vHr{1}(\ell)}\Nfr \right)^\perp \times \Nfr.
\end{split}
\end{align}
\end{subequations}
The linearised equations with respect to 
$\left( \ell, t_1, t_2, t_3 \right) $ at $(r, \ell, t_1, t_2, t_3) = (0, \lo, \wtau, \wtau, \wT)$ are given by
\begin{subequations}\label{eq:linear0}
\begin{align}
& \dl = (\dep, \dx)  \in T_\lo\left(\left(T_{\wxo}N_0\right)^\perp \times N_0 \right) , \\
& \dueforma{\dl}{\vH{1}(\lo)} = 0 , \\
& \dueforma{{\exp\wtau\vH{1}}_*\dl + \dt_1\vH{1}(\ld)}{(\vK{\nu} - \vH{1})(\ld)} = 0 , \\
& \dueforma{{\exp\wtau\vH{1}}_*\dl - \dt_1(\vK{\nu} - \vH{1})(\ld) + \dt_2\vK{\nu}(\ld)}{( \vH{2} - \vK{\nu})(\ld)} = 0 , \\
\begin{split}
& \cFref_{\wT \, *}\dl + {\exp(\wT - \wtau)\vH{2}}_*
\left( \dt_1 \vH{1} + (\dt_2 - \dt_1)\vK{\nu} + (\dt_3 - \dt_2)\vH{2}
\right)(\ld) \\ 
& \qquad\qquad\qquad\qquad  \in T_{\lf}\left(\left(T_{\wxf}N_f\right)^\perp \times N_f \right) .
\end{split}
\end{align}
\end{subequations}
Notice that $\left. \a\right\vert_{N_0} \equiv 0$ so that $\ud\a(x) \in \left( T_x N_0 \right)^\perp$ for any $x \in N_0$. Hence, if in equation \eqref{eq:prima} we write $\ell = \left( p, x \right)$ we must have $p - \ud\a(x) \in \left( T_x N_0 \right)^\perp$ for any $x \in N_0$ so that, if $\dl = \left( \dep, \dx \right)$ we get $\dx \in   T_{\wxo} N_0 $ $\dep - \uD^2\a(\wxo)[\dx, \cdot] \in  \left( T_{\wxo} N_0 \right)^\perp$. Thus, taking the pull-back to time $t=0$, the homogeneous linear system \eqref{eq:linear0} admits a nontrivial solution if and only if 
there exists $\dl = (\dep, \dx) \in T_{\lo}T^*\Rn $, $\dt_1$, $\dt_2$, $\dt_3 \in \R$, with at least one of them being different from zero, such that 
\begin{subequations}\label{eq:linear2}
\begin{align}
& \dl = \ud\a_*\dx + \omega_0, \qquad \dx \in T_{\wxo}N_0 , \ \omega_0 \in T_{\wpo}\left( T_{\wxo}N_0 \right)^\perp , \displaybreak[0]\\ 
& \dueforma{\dl}{\vG{1}(\lo)} = 0 , \displaybreak[0]\\
& \dueforma{\dl + \dt_1\vG{1}(\lo)}{(\vJ{\nu} - \vG{1})(\lo)} = 0 , \displaybreak[0]\\
& \dueforma{\dl - \dt_1(\vJ{\nu} - \vG{1})(\lo) + \dt_2\vJ{\nu}(\lo)}{( \vG{2} - \vJ{\nu})(\lo)} = 0 , \displaybreak[0]\\
& \dxf := \dx + 
\left( \dt_1 g_{1} + (\dt_2 - \dt_1)j_{\nu} + (\dt_3 - \dt_2)g_{2}
\right)(\wxo) \in T_{\wxo}\wh N_f, \displaybreak[0]\\
\begin{split}
& \dl + 
\left( \dt_1 \vG{1} + (\dt_2 - \dt_1)\vJ{\nu} + (\dt_3 - \dt_2)\vG{2}
\right)(\lo) \\ 
& \qquad\qquad\qquad\qquad = \ud \, (-\wbe)_*\dxf + \omega_{f}, \qquad \omega_f \in T_{\wpo}\left( T_{\wxo}\wh N_f \right)^\perp .
\end{split}
\end{align}
\end{subequations}
Applying the anti symplectic isomorphism $i^{-1}$ and denoting $i^{-1}\dl = (\dep, \dx)$, equations \eqref{eq:linear2} can also be written as
\begin{subequations}\label{eq:linear3}
\begin{align}
& (\dep, \dx) = -\uD^2(\a + \wbe)(\wxo)(\dx, \cdot) + \omega_0, \qquad \dx \in T_{\wxo}N_0 , \ \omega_0 \in \left( T_{\wxo}N_0 \right)^\perp , \\
& \dueforma{(\dep, \dx)}{\vG{1}\se} = 0 , \\ %
& \dueforma{(\dep, \dx) + \dt_1 \vG{1}\se}{\vJ{\nu}\se} = 0 , \\ %
& \dueforma{(\dep, \dx) + \dt_1 \vG{1}\se + (\dt_2 - \dt_1)\vJ{\nu}\se}{\vG{2}\se} = 0 , \\ %
\begin{split}
& (\dep_T, \dx_T) :=  (\dep, \dx) + \dt_1 \vG{1}\se + (\dt_2 - \dt_1)\vJ{\nu}\se \\ %
& \phantom{(\dep_T, \dx_T) :=   (\dep, \dx) } + (\dt_3 - \dt_2) \vG{2}\se
\in \left( T_{\wxo}\wh N_f \right)^\perp \times T_{\wxo} \wh N_f.
\end{split}
\end{align}
\end{subequations}
Thus, by Proposition \ref{prop:nucleo}, the variation $(\dx, \dt_1, \dt_2 - \dt_1, \dt_3 - \dt_2)$ is in $V_{0,\nu} \cap V_{0,\nu}^\perp $. As $J\se_\nu$ is coercive on $V_{0,\nu}$ we can apply Corollary \ref{cor:nucleo} and we get $\dx =0$, $\dt_1 = \dt_2 = \dt_3 = 0$, so that $\dep = 0$ if and only if $\omega_0 = 0$.
By equations \eqref{eq:linear3}, 
\begin{multline*}
\omega_0 \in  \Span\left\{ T_{\wxo} N_0 , \ T_{\wxo} \wh N_f, \  g_1(\wxo), j_\nu(\wxo), g_2(\wxo) \right\}^\perp =  \\
= \Span\left\{ T_{\wxo} N_0 , \ T_{\wxo} \wh N_f, \  \wt f_0(\wxo), \wt f_1(\wxo), \ \wt f_2(\wxo) \right\}^\perp,
\end{multline*}
 thus the controllability assumption, Assumption \ref{ass:control} and Lemma \ref{le:control} yield the claim. 

Thus we can apply the implicit function theorem to system \eqref{eq:Psi}. For $r \in \ballt$ let  $(\ellor, \taur{1}, \taur{2}, \Tr)$, $\ellor = (p_0^r, \xor)$ be the solution of system \eqref{eq:Psi}. The piecewise smooth curve $\lar (t) =  (\mu^r(t), \xi^r(t)) $ defined by
\begin{equation*}
\left\{
\begin{alignedat}{2}
\left. 
\begin{aligned} 
\exp t \vHr{1}(\ellor) , \quad & t \in [0, \taur{1}] , \\
\exp (t  - \taur{1} )\vKr{1}\circ\exp \taur{1} \vHr{1}(\ellor)  , \quad & t \in [\taur{1}, \taur{2}] , \\
\exp(t - \taur{2})\vHr{2} \circ\exp (\taur{2} - \taur{1} )\vKr{1}\circ\exp \taur{1}\vHr{1}(\ellor)  , \quad & t \in [\taur{2}, \Tr] , \\
\end{aligned}
\right\}
&& \quad \text{if }\taur{1} <  \taur{2} \\
\left. 
\begin{aligned}
\exp t \vHr{1}(\ellor) , \quad & t \in [0, \taur{2}] , \\
\exp (t  - \taur{2} )\vKr{2}\circ\exp \taur{2} \vHr{1}(\ellor)  , \quad & t \in [\taur{2}, \taur{1}] , \\
\exp(t - \taur{1})\vHr{2} \circ\exp (\taur{1} - \taur{2} )\vKr{2}\circ\exp \taur{2}\vHr{1}(\ellor)  , \quad & t \in [\taur{1}, \Tr] , \\
\end{aligned}
\right\}
&& \quad \text{if }\taur{2} < \taur{1}, \\
\left.
\begin{aligned}
\exp t \vHr{1}(\ellor) , \quad & t \in [0, \taur{1}] , \\
\exp(t - \taur{2})\vHr{2} \circ\exp \taur{2}\vHr{1}(\ellor)  , \quad & t \in [\taur{1}, \Tr] , \\
\end{aligned}
\right\}
&& \quad \text{if }\taur{2} = \taur{1}
\end{alignedat}
\right.
\end{equation*}
is a normal extremal of problem \eqref{eq:problemar} and satisfies claims 1-4

We can now complete the proof by proving claims 5-6:  possibly restricting $\wt R$ and $\cO(\lo)$ we can assume, by continuity
\begin{align}
& \begin{alignedat}{2}
& \Fr{i}(\lar(t)) < 0 \qquad && \forall t \in [0, \wtau - \e] , \\
& \Fr{i}(\lar(t)) > 0 \qquad && \forall t \in [\wtau - \e, \Tr] , \\
\end{alignedat} \qquad i=1, 2, \notag 
\displaybreak[0] \\
& \begin{aligned}
& \dueforma{\vHr{1}}{\vKr{\nu}}(\lar(t)) > 0
 \\
& \dueforma{\vKr{\nu}}{\vHr{2}}(\lar(t))  >  0
  \\
\end{aligned} \qquad  \forall t \in [\wtau - \e, \wtau + \e], \qquad \nu =1, 2 ,
\label{eq:ineq2r}
\end{align}
By construction, $\lar$ is a normal Pontryagin extremal of \eqref{eq:problemar}.
We prove Claim 5 in the case when $\taur{1} < \taur{2}$. The other cases are analougous. For any $t \in (\wtau  - \e, \taur{1})$ there exists $\theta_1 \in (t, \taur{1})$ such that 
\begin{equation*}
\begin{split}
2 \Fr{1}(\lar(t)) &= 2 \Fr{1}(\lar(\taur{1})) + (t - \taur{1}) \dedt
{\left( 2 \Fr{1}\circ\lar \right)}(\theta_1) \\ 
&= (t - \taur{1}) \dueforma{\vHr{1}}{2\vFr{1}}(\lar(\theta_1)) 
= (t - \taur{1}) \dueforma{\vHr{1}}{\vKr{1}}(\lar(\theta_1)) 
\end{split}
\end{equation*}
which is negative by \eqref{eq:ineq2r}.
Analougously,  for any $t \in (\taur{1}, \taur{2}]$ there exists $\theta_2 \in (\taur{1}, t)$ such that 
\begin{equation}
\begin{split}
2 \Fr{1}(\lar(t)) &= 2 \Fr{1}(\lar(\taur{1})) + (t - \taur{1}) \dedt
{\left( 2 \Fr{1}\circ\lar \right)}(\theta_2) \\ 
&= (t - \taur{1}) \dueforma{\vKr{1}}{2\vFr{1}}(\lar(\theta_2)) 
= (t - \taur{1}) \dueforma{\vHr{1}}{\vKr{1}}(\lar(\theta_2)) 
\end{split} \label{eq:ineq3r}
\end{equation}
which is positive by \eqref{eq:ineq2r}.
Finally, if $t \in (\taur{2}, \wtau + \e)$ there exists $\theta_3 \in (\taur{2}, \wT + \e)$ such that
\begin{equation*}
\begin{split}
2 \Fr{1}(\lar(t)) &= 2 \Fr{1}(\lar(\taur{2})) + (t - \taur{2}) \dedt
{\left( 2 \Fr{1}\circ\lar \right)}(\theta_3) \\ 
&= 2 \Fr{1}(\lar(\taur{2})) + (t - \taur{2}) \dueforma{\vHr{2}}{2\vFr{1}}(\lar(\theta_3))  \\ 
& = 2 \Fr{1}(\lar(\taur{2})) + (t - \taur{2}) \dueforma{\vKr{2}}{\vHr{2}}(\lar(\theta_3)) 
\end{split}
\end{equation*}
which is positive by \eqref{eq:ineq2r} and \eqref{eq:ineq3r}.
The proof for the sign of $\Fr{2}(\lar(t))$ follows the same line.

Finally, the switching times $\taur{i}$ are regular (claim 6) thanks to inequalities \eqref{eq:ineq2r}.
\end{proof}
We can now prove Theorem \ref{thm:main1}, i.e.~we prove that projection $\xir$ of the extremal $\lar$ defined in Lemma \ref{le:implicit} is a state-local optimal trajectory for problem \eqref{eq:problemar}.
\begin{proof}[Proof of Theorem \ref{thm:main1}]
By construction and by Lemma \ref{le:implicit}, $(\Tr, \xir = \pi\lar, \ur)$ satisfies PMP in its normal form and the regularity assumptions for problem \eqref{eq:problemar}. Thus it suffices to prove that  $\xir$ has no self-intersection and that the second order variation associated to \eqref{eq:problemar} is coercive.

{\em Injectivity of $\xir$. } Assume by contradiction there exists a sequence $\{r_k\}_{k \in \N} \subset \ballt$
that converges to $0$ and such that there exist $t_{1, k}$, $t_{2, k}$, 
$0 \leq t_{1, k} < t_{2, k} \leq \Tk$, $\xik(t_{1, k}) = \xik(t_{2, k})$. 
Up to a subsequence both  $t_{1, k}$ and $t_{2, k}$ converge.
Let $\overline{t}_i := \lim_{k\to\infty}t_{i, k} \in [0, \wT]$, $i = 1, 2$.
If $\overline t_1 < \overline t_2$, then $\wxi(\overline t_1) = \wxi(\overline t_2)$, a contradiction.
Assume then $\overline t_1 = \overline t_2 =: \overline t$.
Different cases may occur:
\begin{enumerate}[ leftmargin=0mm, label=\bf\arabic*.]
\setlength\itemindent{6mm} 
\item Up to a subsequence $0 \leq t_{1, k} < t_{2, k} \leq \tauk{1}$. In this case
\begin{equation*}
0 = \xik(t_{2, k}) - \xik(t_{1, k}) = \int_{t_{1, k}}^{t_{2, k}}\hr{1}(\xik(s)) \ud s.
\end{equation*} 
Applying the mean value thorem componentwise we get:
\begin{equation}
\label{eq:inje1}
\forall j = 1, \ldots, n \qquad \exists s^k_j \in [t_{1, k} , t_{2, k}]  \colon (\hr{1})_j(\xik(s^k_j)) = 0.
\end{equation}
Thus, as $k \to \infty$ in \eqref{eq:inje1} we obtain $h_1(\wxi(\overline t)) = 0$, a contradiction since $\overline t \in [0, \wtau_1]$ and  $H_1(\wla(t)) = 1 \quad \forall t \in [0, \wtau_1]$.
\item Up to a subsequence $0 \leq t_{1, k} \leq \tauk{1} < t_{2, k} \leq \tauk{2}$. In this case 
\begin{multline}
\label{eq:inje2}
\qquad 0 = \dfrac{\xik(t_{2, k}) - \xik(t_{1, k})}{t_{2, k} - t_{1, k}} = 
\dfrac{\tauk{1} - t_{1, k}}{t_{2, k} - t_{1, k}} 
\fint_{t_{1, k}}^{\tauk{1}} \hr{1}(\xik(s)) \ud s + \\ 
+ \dfrac{t_{2, k}  - \tauk{1} }{t_{2, k} - t_{1, k}}  \fint_{\tauk{1}}^{t_{2, k}}\kr{1}(\xik(s)) \ud s. \qquad
\end{multline}
Up to a subsequence there exists $\displaystyle\lim_{k\to\infty}\dfrac{\tauk{1} - t_{1, k}}{t_{2, k} - t_{1, k}} = c \in [0,1]$, so that passing to the limit in \eqref{eq:inje2} we obtain 
\begin{equation*}
 0 = c \, h_1(\wxd) + (1 - c) \, k_1(\wxd) = f_0(\wxd) + (1 - 2c)\, f_1(\wxd) - f_2(\wxd).
\end{equation*}
A contradiction, since $f_0(\wxd) \notin \Span \left\{ f_1(\wxd), f_2(\wxd) \right\}$.
\item Up to a subsequence $0 \leq t_{1, k} \leq \tauk{1} \leq \tauk{2} \leq t_{2, k}$. In this case 
\begin{multline}
\label{eq:inje3}
\quad 0 = \dfrac{\xik(t_{2, k}) - \xik(t_{1, k})}{t_{2, k} - t_{1, k}} = 
\dfrac{\tauk{1} - t_{1, k}}{t_{2, k} - t_{1, k}} 
\fint_{t_{1, k}}^{\tauk{1}} \hr{1}(\xik(s)) \ud s + \\ 
+ \dfrac{\tauk{2}  - \tauk{1} }{t_{2, k} - t_{1, k}}  \fint_{\tauk{1}}^{t_{2, k}}\kr{1}(\xik(s)) \ud s 
+ \dfrac{t_{2, k} - \tauk{2}}{t_{2, k} - t_{1, k}}  \fint_{\tauk{2}}^{t_{2, k}} \hr{2}(\xik(s)) \ud s 
. \qquad
\end{multline}
Up to a subsequence there exist 
\begin{equation*}\displaystyle\lim_{k\to\infty}\dfrac{\tauk{1} - t_{1, k}}{t_{2, k} - t_{1, k}} = c_1 \in [0,1], \qquad
\displaystyle\lim_{k\to\infty}\dfrac{\tauk{2} - \tauk{1}}{t_{2, k} - t_{1, k}} = c_2 \in [0,1],
\end{equation*}
 so that passing to the limit in \eqref{eq:inje3} we obtain 
\begin{multline*}
\qquad  0 = c_1 \, h_1(\wxd) + c_2 \, k_1(\wxd)+ (1- c_1 - c_2) \, h_2(\wxd) = \\ 
= f_0(\wxd) + (1 - 2 c_1)\, f_1(\wxd) - (1 - 2c_1 - 2c_2)f_2(\wxd). \qquad 
\end{multline*}
A contradiction, since $f_0(\wxd) \notin \Span \left\{ f_1(\wxd), f_2(\wxd) \right\}$.
\end{enumerate}
In the other cases the proof follows the same line.

{\em Coercivity of the second variation. }  
Let $(\Tr, \lar, \ur)$ be the extremal defined in Lemma \ref{le:implicit}, let $\xir := \pi\lar$ and $\xor := \xir(0)$. Assume $\taur{1} < \taur{2}$. In this case the trajectory $\xir$ is driven by the dynamics
\begin{equation*}
\phi^r_t := \begin{cases}
\hr{1},  \quad & t \in [0, \taur{1}], \\ 
\kr{1},  \quad & t \in (\taur{1}, \taur{2}], \\ 
\hr{2},  \quad & t \in (\taur{2}, \Tr].
\end{cases}
\end{equation*}

Let $\Sr{t}$ be the flow at time $t$ associated to $\phi^r_t$ and consider the pull-back vector fields
\begin{equation*} 
\begin{alignedat}{2}
  \gr{1}(x) &:= \Srinv{t \, *} \hr{1} \circ \Sr{t}(x), \quad && t \in [0, \taur{1}],  \\ %
  \jr{1}(x) &:= \Srinv{t \, *} \kr{1} \circ \Sr{t}(x), \quad && t \in [\taur{1}, \taur{2}], \\
  \gr{2}(x) &:= \Srinv{t \, *} \hr{2} \circ \Sr{t}(x), \quad && t \in [\taur{2}, \Tr].
\end{alignedat}
\end{equation*}
Let $\ar$ be a function that vanishes on $\Nor$ and such that $\ud\a^r(\xir(0)) = \lar(0)$. Let $\br$ be a smooth function that vanishes on $\Srinv{\Tr}(\Nfr)$, such that $\ud\br(\xir(0)) = - \lar(0)$. 
Finally consider the linearisation of the constraints
\begin{equation*}
 V_0^r := \left\{
 \de = (\dx, a_1, b, a_2) \in T_{\xor}\Nor \times \R^3 \colon 
\dx + a_1 \gr{1} + b \jr{1} + a_2 \gr{2} \in T_{\xor}{\Nor}
  \right\} .
\end{equation*}
Then the second variation at the switching points, see e.g.~\cite{Pog06}, is given by
\begin{equation*}
\begin{split}
J\se_r[\de]^2 & =   \dfrac{1}{2} \uD^2 \left( \ar + \br \right)(\xor)[\dx]^2 
+ \liedede{\dx}{(a_1 \gr{1} + b \jr{1} + a_2 \gr{2})}{\br}{\xor} \\ 
& + \dfrac{1}{2}\liededo{(a_1 \gr{1} + b \jr{1} + a_2 \gr{2})}{\br}{\xor} + \dfrac{1}{2} a_1 b \, \liede{\liebr{\gr{1}}{\jr{1}}}{\br}{\xor} \\ 
& + \dfrac{1}{2} a_1 a_2 \, \liede{\liebr{\gr{1}}{\gr{2}}}{\br}{\xor}
+ \dfrac{1}{2} a_2 b \, \liede{\liebr{\jr{1}}{\gr{2}}}{\br}{\xor}.
\end{split}
\end{equation*}
We now show, with a contradiction argument,  that $J\se_r$ is coercive on $V_0^r$: assume there exists a sequence  $\{r_k\}_{k \in \N} \subset (0, \wt R)$
that converges to $0$ and such that $J\se_{r_k}$ is not coercive on $V_0^r$, i.e.~there exists $\de^k = \left( \dx^k, a_1^k, b^k, a_2^k
\right) \in V_0^r$ such that $\norm{\dx^k} + \abs{a_1^k} + \abs{b^k} +\abs{a_2^k} = 1$ and $J\se_r[\de^ k]^2 \leq 0$.
Up to a subsequence $\de^k$ converges to some $\overline\de = \left( 
\overline\dx, \overline a_1, \overline b, \overline a_2
\right) \in V_{0,1}$ and such that $\norm{ \overline\dx} + \abs{ \overline a_1} + \abs{ \overline b} +\abs{ \overline a_2} = 1$. Thus
\begin{equation*}
0 \geq \lim_{k\to\infty} J\se_r[\de^k]^2 = J\se[\overline\de]^2 > 0,
\end{equation*}
a contradiction. 

We have thus proved that $\left( \Tr, \xir, \ur \right)$, together with $\lar$ satisfies all the assumptions of Theorem 1 in \cite{Pog06}, so that $\xir$ is a state-locally optimal trajectory 
for problem \eqref{eq:problemar}. 
If $\taur{2} < \taur{1} $ the proof follows the same lines. 

Let us consider the case $\taur{1} = \taur{2} =: \taur{}$. In this case, as in the nominal problem \eqref{eq:problema0} we have to consider two different second order approximations and to prove that they are coercive on the respective half-space of linearised constraints.

The trajectory $\xir$ is driven by the dynamics
\begin{equation*}
\phi^r_t := \begin{cases}
\hr{1},  \quad & t \in [0, \taur{}], \\ 
\hr{2},  \quad & t \in (\taur{}, \Tr].
\end{cases}
\end{equation*}
Denoting again by $\Sr{t}$ the flow at time $t$ associated to $\phi^r$, we consider the pullback vector fields  
\begin{equation*} 
\begin{alignedat}{2}
  \gr{i}(x) &:= \Srinv{\taur{} \, *} \hr{1} \circ \Sr{\taur{}}(x),   \quad && i = 1, 2, \\ %
  \jr{\nu}(x) &:= \Srinv{\taur{} \, *} \kr{\nu} \circ \Sr{\taur{}}(x), \quad && \nu = 1, 2.
\end{alignedat}
\end{equation*}
Let $\ar$, $\br$ and $\gamma^r$ be as before. Then the linearisation of the constraints is given by the half spaces
\begin{multline*}
 V^{+, r}_{\nu} := \left\{ \de = (\dx, a_{1}, b, a_{2}) \in T_{\xor}\Nor \times \R \times \R^{+} \times \R \colon \right. \\ 
\left. \dx + a_{1}\gr{1}(\wxo) + b \, \jr{\nu}(\wxo)
  + a_{2}\gr{2}(\wxo) \in T_{\xor} \Srinv{\Tr}(\Nfr) \right\},
  \quad \nu =1, 2  \quad
\end{multline*}
and the second order approximation is given by
\begin{equation*}
\begin{split}
{J\se_{\nu}}^{, r}  [\dx, a_{1}, b, a_{2}] = & \uD^{2} \left( \ar + \br \right)(\xor)[\dx]^{2} + 2 
\liedede{\dx}{ (a_{1}\gr{1} + b \, \jr{\nu} + a_{2}\gr{2})}{ \br}{\xor}  \\ 
&+ \liededo{\left( a_{1}\gr{1} + b \, \jr{\nu} + a_{2}\gr{2} \right) }{\br}{\xor} 
+ a_{1}b \liede{\liebr{\gr{1}}{\jr{\nu}}}{ \br}{\xor}     \\ 
& + a_{1}a_{2} \liede{\liebr{\gr{1}}{\gr{2}} }{ \br }{\xor } + b \, a_{2} \liede{\liebr{\jr{\nu}}{\gr{2}}}{ \br}{\xor}.
\end{split}
\end{equation*}
With the same contradiction argument used in the previous case it is easy to show that ${J\se_{\nu}}^{, r} $ is coercive on $ V^{+, r}_{\nu}$, $\nu=1,2$. Thus $\left( \Tr, \xir, \ur \right)$, together with $\lar$ satisfies all the assumptions of Theorem 4.2 in \cite{PSp15}, so that $\xir$ is a state-locally optimal trajectory for problem \eqref{eq:problemar}. 
\end{proof}

\section{Local uniqueness}
We now prove the local uniqueness of the extremal $\lar$ in the cotangent bundle $T^*\R^n$, namely we prove Theorem \ref{thm:main2}.
 The proof is carried out by showing that there exists a tubular neighborhood $\cV$ in $\R \times T^*\Rn$ of the graph of $\wla$ such that, if $\widetilde\la \colon [0, \widetilde T] \to T^*\Rn$ is an extremal whose graph is in $\cV$, with $\widetilde T$ close to $\wh T$,  then the associated control $\widetilde u= \left(\widetilde u_1, \widetilde u_2 \right)$ is bang-bang  and each control component switches once and only once from the value $-1$ to the value $1$. This implies that $\widetilde\la$  satisfies system \eqref{eq:Psi} which, by the implicit function theorem, admits one and only one solution, i.e.~$\widetilde\la = \la^ r$. 

By the regularity assumption at the switching time (Assumption \ref{ass:3}) and by continuity, there exists $\overline\delta > 0$ such that
  \begin{equation*} 
 \scal{\wla(t)}{\liebr{f_0}{f_i} (\wxi(t))} > 
 \abs{\scal{\wla(t)}{\liebr{f_1}{ f_2} (\wxi(t))}} ,
    \quad  \forall t \in [\wtau - \overline\delta, \wtau + \overline\delta], \quad i = 1,2 . 
  \end{equation*}
For any $\delta \in (0, \overline{\delta}]$
and $i=1, 2$ define
\begin{alignat*}{3}
& \a^a_i(\delta) &&= \min &&\left\{
u_i(t)F_i \circ \wla(t) = - F_i \circ \wla(t) \colon t \in [0, \wtau - \delta ] 
\right\} , \\
& \a^p_i(\delta) &&= \min &&\left\{
u_i(t)F_i \circ \wla(t) = F_i \circ \wla(t) \colon t \in [\wtau + \delta, \wT ] 
\right\} ,
\end{alignat*}
and let 
\begin{equation*}
m(\delta) := \min\left\{
\dueforma{\vF{0}}{\vF{1}}(\wla(t)) - \abs{\dueforma{\vF{1}}{\vF{2}}(\wla(t))}, \ i=1, 2, \ t \in [\wtau - \delta, \wtau + \delta]
\right\}.
\end{equation*}
By continuity there exists $\cO(\lo) \subset T^*\Rn$ such that 
\begin{alignat*}{2}
& F_i \circ \cFref_t(\ell) < \dfrac{- \, \a^a_i(\delta)}{2} \qquad && \forall (t, \ell) \in [0, \wtau - \delta ] \times \cO(\lo), \\
& F_i \circ \cFref_t(\ell) > \dfrac{ \a^p_i(\delta)}{2} && \forall (t, \ell) \in  [\wtau + \delta, \wT ]\times \cO(\lo), \\
& \dueforma{\vF{0}}{\vF{1}}(\cFref_t(\ell)) - \abs{\dueforma{\vF{1}}{\vF{2}}(\cFref_t(\ell))} > \dfrac{m(\delta)}{2}\quad && \forall (t, \ell) \in  [\wtau - \delta, \wT + \delta ]\times \cO(\lo),
\end{alignat*}
and, again by continuity, there esists $\ov R > 0$ such that 
\begin{alignat}{3}
& \Fr{i} \circ \cFref_t(\ell) < \dfrac{- \, \a^a_i(\delta)}{4} \qquad && \forall (t, \ell) \in [0, \wtau - \delta ] \times \cO(\lo), \quad && \forall r \colon  \abs{r} \leq \ov R , \label{eq:Fra} \\
& \Fr{i} \circ \cFref_t(\ell) > \dfrac{ \a^p_i(\delta)}{4} && \forall (t, \ell) \in  [\wtau + \delta, \wT ]\times \cO(\lo), \quad && \forall r \colon \abs{r} \leq \ov R , \label{eq:Frp}
\end{alignat}
and
\begin{equation}
\begin{split}
\dueforma{\vFr{0}}{\vFr{1}}(\cFref_t(\ell)) -
 \abs{\dueforma{\vFr{1}}{\vFr{2}}(\cFref_t(\ell))} > \dfrac{m(\delta)}{4} \\ 
   \forall (t, \ell) \in  [\wtau - \delta, \wT + \delta] & 
   \times \cO(\lo), \quad  \forall r \colon \abs{r} \leq \ov R .
\end{split}
\end{equation}
Let $\wt\la \colon [0, \wt T] \to T^*\Rn$ be an extremal of \eqref{eq:problemar} whose graph is in the tubular set
\begin{equation*}
\cV_\delta = \left\{
(t, \cFref_t(\ell)) \colon t \in [0, \wh T + \delta], \ \ell \in \cO_\delta(\lo)
\right\}
\end{equation*}
and such that $\abs{\wt T -\wh T} < \delta$. \\
By \eqref{eq:Fra}-\eqref{eq:Frp}, for $i=1, 2$, 
\begin{equation*}
 \Fr{i} \circ \wt\la(t) < \dfrac{- \, \a^a_i}{4} \quad \forall t \in [0, \wtau -\delta], \qquad 
 \Fr{i} \circ \wt\la(t) > \dfrac{ \a^p_i}{4} \quad \forall t \in [\wtau +\delta, \wt T]
\end{equation*}
hence there exists $\wt t_i \in (\wtau - \delta, \wtau + \delta)$ such that $\Fr{i} \circ \wt\la(\wt t_i) = 0$.
We now prove that $\wt t_i$ is the only time at which $\Fr{i}\circ \wt \la$ is zero. More precisely we show that  $\Fr{i}\circ \wt \la(t) $ is strictly monotone increasing in the interval $[\wtau - \delta, \wtau + \delta]$ . Let $\wtau - \delta \leq s_1 < s_2 \leq \wtau + \delta$:
\begin{multline*}
\Fr{i} \circ \wt\la(s_2) - \Fr{i} \circ \wt\la(s_1) = 
 \int_{s_1}^{s_2} \dds\Fr{i}\circ\wt\la(s) \ud s =  \\ 
= \int_{s_1}^{s_2} \dueforma{\vFr{0} + \wt u_1(s)\vFr{1} + \wt u_2(s) \vFr{2}}{\vFr{1}}(\wt\la(s)) \ud s = \\ 
 = \int_{s_1}^{s_2} \left( \dueforma{\vFr{0}}{\vFr{1}} - \wt u_2(s)\dueforma{\vFr{1}}{\vFr{2}}   \right) (\wt\la(s)) \ud s  > (s_2 - s_1)\dfrac{m(\delta)}{4}.
\end{multline*}
Thus each component of the control $\wt u$  associated to $\wt\xi := \pi\wt\la$ switches once and only once from the value $-1$ to the value $+1$.

\bibliographystyle{plain}
\bibliography{bibliocompleta}
\end{document}